\theoremstyle{definition}
\newtheorem{definition}{Definition}[section]
\newtheorem{proposition}{Proposition}[section]
\newtheorem{theorem}{Theorem}[section]
\newtheorem{corollary}{Corollary}[section]
\newtheorem{lemma}{Lemma}[section]
\newtheorem{result}{Result}[section]
\newtheorem{remark}{Remark}
\newtheorem{example}{Example}[section]
\newtheorem*{remark*}{Remark}
\newtheorem*{theorem*}{Theorem}
\newcommand{\A}{\mathcal{A}}
\begin{document}
	
	\begin{frontmatter}
		
		
		
		\title{New Tight Wavelet Frame Constructions Sharing Responsibility}
        
		
		\author[inst1]{Youngmi Hur}
		\ead{yhur@yonsei.ac.kr}
		
		\affiliation[inst1]{organization={Department of Mathematics, Yonsei University},
			city={Seoul},
			postcode={03722}, 
			country={Korea}}
		
		\author[inst2]{Hyojae Lim}
		\ead{hyojae.lim@oeaw.ac.at}
		
		\affiliation[inst2]{organization={Johann Radon Institute for Computational and Applied Mathematics~(RICAM)},
			postcode={4040}, 
			city={Linz},
			country={Austria}}
		
		\begin{abstract}
			Tight wavelet frames (TWFs) in \(L^2(\mathbb{R}^n)\) are versatile, and are practically useful due to their perfect reconstruction property. Nevertheless, existing TWF construction methods exhibit limitations, including a lack of specific methods for generating mother wavelets in extension-based construction, and the necessity to address the sum of squares (SOS) problem even when specific methods for generating mother wavelets are provided in SOS-based construction. Many TWF constructions begin with a given refinable function. However, this approach places the entire burden on finding suitable mother wavelets. In this paper, we introduce TWF construction methods that spread the burden between both types of functions: refinable functions and mother wavelets. These construction methods offer an alternative approach to addressing the SOS problem. We present examples to illustrate our construction methods.
		\end{abstract}
		
		%
		
		\begin{keyword}
			Wavelet construction \sep Wavelet frames \sep Tight frames \sep Sum of hermitian squares
			\MSC 42C40 \sep 42C15
		\end{keyword}
		
	\end{frontmatter}
	
	
	
	\section{Introduction}
	
	Orthonormal wavelets in \(L^2(\mathbb{R}^n)\) are well-known mathematical tools for analyzing various signals. These wavelet systems possess the perfect reconstruction property, ensuring minimal loss of information during the processes of decomposition and reconstruction due to their orthogonality. Wavelet frames offer more practical and flexible structures compared to orthonormal wavelets, while still providing the perfect reconstruction property. In particular, tight wavelet frames (TWFs) in \(L^2(\mathbb{R}^n)\) are advantageous for applications, as their duals are (essentially) the same as the primal ones, which makes them a natural generalization of orthonormal wavelets.
	
	A fundamental approach for obtaining a TWF is using the unitary extension principle (UEP) \cite{[RS]UEP}. The UEP condition (c.f., (\ref{eqn : UEP_condition})) is sufficient for a collection of mother wavelets
	to generate a TWF with a given refinable function with its refinement mask.
	While constructions employing the unitary extension principle are simple and easy to handle, the vanishing moments of the TWF are often not optimal. To address this issue, constructions utilizing the oblique extension principle (OEP) have been introduced \cite{[ChuiStockler]OEP, [DHRS]Framelets}. The OEP is a generalization of the UEP and can potentially enhance the vanishing moments of the TWF with the help of the so-called vanishing-moment recovery function. Both of these extension-based constructions allow us to generate a TWF when the refinable function is given together with a collection of mother wavelets satisfying corresponding conditions. Although these constructions have been used extensively for obtaining TWFs, one of the main difficulties in using them is that they still lack a specific procedure for generating the mother wavelets.
	
	Subsequently, constructions based on a sum of the (Hermitian) squares (SOS) representation provide a new way to generate specific mother wavelets constituting a TWF, when a refinable function (with its refinement mask) is given. One such construction works under the sub-QMF condition, which is the nonnegativity of the function \(f_\tau\) detailed in (\ref{eqn : sub-QMF}) within Section~\ref{subsection : SOS-based construction},
	where \(\tau\) is a refinement mask~\cite{CPSS, LaiStockler}. These papers show that \(f_\tau\) has a sum of squares representation if and only if there exist mother wavelets that satisfy the unitary extension principle conditions. Furthermore, this construction method gives a specific form of mother wavelets that generate a TWF when a sum of squares representation of \(f_\tau\) is available. 
	
	A generalization to the above construction is also introduced, and it gives an SOS-based construction that corresponds to the oblique extension principle condition, which works under the oblique sub-QMF condition (c.f.,~(\ref{eqn : oblique sub-QMF})) as shown in~\cite{[HLO]MultiTWFhighVM}. The oblique sub-QMF condition corresponds to the nonnegativity of the function derived from a vanishing-moment recovery function and a refinement mask. Similar to the UEP case, the existence of SOS generators of this nonnegative function and the existence of mother wavelets satisfying the oblique extension principle conditions are closely related. This method likewise provides an explicit form of mother wavelets that generate a TWF when SOS generators are found.
	
	However, there is a significant difficulty in constructions based on a sum of squares representation. This is essentially due to the nature of the SOS problem itself. More precisely, determining whether a nonnegative polynomial admits an SOS form, often referred to as the {\em SOS problem}, has long been recognized as a major issue in real algebraic geometry. It is well known that not every nonnegative {\em algebraic polynomial} can be expressed as a sum of squares of algebraic polynomials \cite{Hilbert1888,benoist2017}. This question evolved into whether every nonnegative algebraic polynomial can be represented as an SOS of {\em rational algebraic polynomials}, a problem famously known as Hilbert's 17th problem \cite{RudinSOSPoly}. Today it is established that any such nonnegative polynomial can admit an SOS representation using rational algebraic polynomials \cite{Artin1927,Pfister1967,benoist2017}. 
	
	As we will present in more detail in Section~\ref{subsection : SOS-based construction}, the particular challenges for TWF constructions arise in the context of {\em trigonometric polynomials} (and their {\rm rational} counterparts), rather than algebraic ones. Various studies have investigated whether a nonnegative trigonometric polynomial can be written as the sum of squares of either trigonometric polynomials or {\em rational} trigonometric polynomials \cite{CPSS, CPSSII, Scheiderer2006,[HurZach]LAA, [HurZach]Interpolatory}. Despite these efforts, using an SOS representation to construct TWFs remains demanding, largely because the functions forming the sum of squares must be found explicitly.

	Moreover, many of the previous SOS-based construction methods focus on identifying wavelet masks (and consequently mother wavelets) that meet conditions derived from the refinement mask, assuming the refinement mask is already known. This method largely burdens the wavelet masks, necessitating solving the SOS problem.
	
	\medskip
	
	Motivated by the straightforward construction approach detailed in~\cite{[Hur]TWFPD} and after thoroughly examining the method presented therein, we have found that tight wavelet frames can be efficiently constructed using these SOS-based methods without the need to directly solve the SOS problem. We must satisfy the same identity (c.f., (\ref{eqn : sub-QMF SOSv2}) or (\ref{eqn : oblique sub-QMF SOSv2})); however, our method offers the advantage of not being confined to a single strategy for each of these identities. Specifically, in satisfying the identity in (\ref{eqn : sub-QMF SOSv2}), whereas previous approaches calculate the left-hand side of the identity from the given refinement mask and seek corresponding wavelet masks, our approach introduces conditions (see~(\ref{eqn : our condition on p,g for subQMF})) that ensure the identity in (\ref{eqn : sub-QMF SOSv2}) is upheld.
	This flexibility allows for a more balanced distribution of responsibility among the masks involved (namely, the wavelet masks and the refinement mask), leading to a more adaptable construction process. However, a potential drawback of our method is that it might yield a refinable function that is less conventional in appearance. A more detailed comparison of our construction method with previous ones is provided in Section~\ref{section : our new construction}, immediately after the proof of Theorem~\ref{theorem: our subQMF construction}.
	
	Furthermore, given that our construction approach does not start with a pre-specified refinable function, it becomes crucial to verify whether the refinable function, emerging from the constructed refinement mask, is well-defined in \(L^2(\mathbb{R}^n)\) and exhibits the necessary convergence property. This issue is well-addressed in existing literature for the case when the refinement mask is a trigonometric polynomial \cite{BHan}. However, to our knowledge, no comparable results have been established for refinement masks that are rational trigonometric polynomials. Hence, in this paper, we present such results.
	
	\medskip
	
	The rest of our paper is structured as follows. In Section~\ref{section : Preliminaries}, we offer a brief introduction to tight wavelet frames and discuss construction methods based on extension principles and sum of squares techniques. We present our main results concerning the construction of TWFs sharing responsibility, along with examples illustrating our findings, in Section~\ref{section : our new construction}. Section~\ref{section : refinable ftn with rationalTP} articulates sufficient conditions that ensure the refinable function's well-definedness and the convergence property when employing rational trigonometric polynomials as refinement masks. The conclusion of our work is summarized in Section~\ref{section : conclusion}, while some technical details are provided in the Appendix.
	
	\section{Preliminaries}
	\label{section : Preliminaries}
	
	\subsection{Tight wavelet frames and the extension principles}
	\label{subsection : TWF and EP}
	
	In the definition below and for the rest of the paper, the notation \(\lambda\) is used to denote a dilation factor, an integer greater than or equal to~\(2\).
	\begin{definition}
		\begin{enumerate}[(a)]
			\item
			Let 
			\(\Psi \subset L^2(\mathbb{R}^n)\) be a finite set of functions. 
			A \textit{wavelet system} \(X(\Psi)\) generated by \(\Psi\) is a collection of dilated and translated functions of each function in \(\Psi\), i.e., 
			\(X(\Psi) := \lbrace \psi_{j,k} := \lambda^{jn/2} \psi (\lambda^j \cdot - k) : \psi \in \Psi, j\in\mathbb{Z}, k \in \mathbb{Z}^n \rbrace\). In this case, each function \(\psi\in\Psi\) is called a \textit{mother wavelet}.
			\item 
			A wavelet system  \(X(\Psi)\) is a \textit{tight frame} if  \(\sum_{\psi\in\Psi}\sum_{j,k} \lvert \langle f, \psi_{j,k} \rangle \rvert^2 = \lVert f \rVert_{L^2(\mathbb{R}^n)}^2\), for all \(f \in L^2(\mathbb{R}^n)\). 
		\end{enumerate}
	\end{definition}
	
	It is easy to see that \(X(\Psi)\) is a tight wavelet frame (TWF) if and only if it has a perfect reconstruction property: \(f = \sum_{\psi\in\Psi}\sum_{j,k} \langle f, \psi_{j,k} \rangle \psi_{j,k}\), for all \(f \in L^2(\mathbb{R}^n)\). 
	
	\begin{definition}
		A function \(\phi\in L^2(\mathbb{R}^n)\) is called \textit{refinable} if \(\widehat{\phi}(\lambda \cdot) = \tau(\cdot) \widehat{\phi}(\cdot)\), where \(\tau\) is a \(2\pi\)-periodic function known as the associated \textit{refinement mask}. 
	\end{definition}
	
	Throughout the paper, we normalize the Fourier transform as \(\widehat{f}(\xi) = \int_{\mathbb{R}^n} f(x) e^{-ix \cdot \xi} dx\) for \(f\in L^1(\mathbb{R}^n) \cap L^2(\mathbb{R}^n)\).
	Also, we will assume that \(\tau\) is either a trigonometric polynomial or a rational trigonometric polynomial satisfying \(\tau(0)=1\) in this paper. Specifically, a rational trigonometric polynomial \(\tau\) is defined as \(\tau= \tau_1/\tau_2\), where \(\tau_1\) and \(\tau_2\) are trigonometric polynomials with \(\tau_2 \not\equiv0\), and the greatest common divisor of \(\tau_1\) and \(\tau_2\) is a constant.

	The following is a construction method called the unitary extension principle \cite{[RS]UEP}, which is tailored to our specific context. Here and below, we adopt the notation \(\Gamma :=  (2\pi/\lambda) \cdot \lbrace0, \cdots, \lambda-1 \rbrace^n\).

    Before presenting the precise statement, we remark that the unitary extension principle (UEP) remains valid even when refinement masks \(\tau\) and the functions \(q_l\) are rational trigonometric polynomials. However, for simplicity of presentation and to ensure compact support, we restrict ourselves throughout this paper to the case of trigonometric polynomial masks.
	
	\begin{result}[\cite{[RS]UEP}]
		Let \(\phi \in L^2(\mathbb{R}^n)\) be a refinable function associated with a refinement mask \(\tau\) that is a trigonometric polynomial.
		Suppose that the trigonometric polynomials \(q_l, 1 \leq l \leq L\), satisfy for all \(\xi\in\mathbb{T}^n\)
		\begin{equation}
			\tau(\xi) \overline{\tau(\xi + \gamma)} + \sum_{l=1}^L q_l(\xi) \overline{q_l(\xi + \gamma)} = \begin{cases}
				1, & \text{if } \gamma = 0,\\
				0, & \text{if } \gamma\in \Gamma \setminus \{0\} .
			\end{cases}
			\label{eqn : UEP_condition}
		\end{equation}
		For each \(1 \leq l \leq L\), if we define \(\psi^{(l)} \in L^2(\mathbb{R}^n)\) via \(\widehat{\psi^{(l)}}(\lambda \cdot) = q_l(\cdot) \widehat{\phi}(\cdot)\), then the wavelet system \(X(\psi^{(1)}, \cdots, \psi^{(L)})\) is a compactly supported TWF for \(L^2(\mathbb{R}^n)\).
		\label{result : UEP}
	\end{result}
	
	The following outlines a construction method termed the oblique extension principle \cite{[DHRS]Framelets}, specialized to our setting. We use the notation \(\sigma(\phi)\) to represent the set \(\{\xi \in \mathbb{T}^n : \widehat{\phi}(\xi + 2 \pi k) \not=~0 \text{ for some } k \in \mathbb{Z}^n\}\), which is determined up to a null set.
	\begin{result}[\cite{[DHRS]Framelets}]
		Let \(\phi\in L^2(\mathbb{R}^n)\) be a refinable function associated with a refinement mask \(\tau\) that is a rational trigonometric polynomial. Let \(q_1, \cdots, q_L\) be rational trigonometric polynomials. 
		Suppose that 
		\begin{enumerate}[(a)]
			\item Each function \(\tau, q_1, \cdots, q_L\) belongs to \(L^\infty(\mathbb{T}^n)\).
			\item The refinable function \(\phi\) satisfies \(\lim_{\xi\to0} \widehat{\phi}(\xi) = 1\).
			\item The function \([\widehat{\phi}, \widehat{\phi}](\cdot):= \sum_{k \in \mathbb{Z}^n} \lvert \widehat{\phi}(\cdot + 2\pi k) \rvert^2\) belongs to \(L^\infty(\mathbb{T}^n)\).
		\end{enumerate}
		Suppose that there exists a rational trigonometric polynomial \(S\) satisfying the following conditions:
		\begin{enumerate}[(i)]
			\item \(S\in L^\infty(\mathbb{T}^n)\) is nonnegative, continuous at the origin, and \(S(0) = 1\).
			\item For almost all \(\xi \in \sigma(\phi)\),
			\begin{equation}
				S(\lambda \xi) \tau(\xi) \overline{\tau(\xi + \gamma)} + \sum_{l=1}^L q_l(\xi)\overline{q_l(\xi+\gamma)} = \begin{cases}
					S(\xi), & \text{if } \gamma = 0,\\
					0, & \text{if } \gamma\in \Gamma \setminus \{0\}.
				\end{cases}
				\label{eqn : OEP_condition}
			\end{equation}
		\end{enumerate}
		For each \(1 \leq l \leq L\), if we define \(\psi^{(l)} \in L^2(\mathbb{R}^n)\) via \(\widehat{\psi^{(l)}}(\lambda \cdot) = q_l(\cdot) \widehat{\phi}(\cdot)\), then the wavelet system \(X(\psi^{(1)}, \cdots, \psi^{(L)})\) is a TWF for \(L^2(\mathbb{R}^n)\).
		\label{result : OEP}
	\end{result}
	
	\begin{remark}
		According to \cite{BHan}, when \(\tau\) is given as a trigonometric polynomial with \(\tau(0)=1\) and satisfies the UEP condition (\ref{eqn : UEP_condition}) with appropriate \(q_l\) for \(1 \leq l \leq L\), the distribution \(\phi_\tau\), defined by \(\widehat{\phi}_\tau(\xi):= \prod_{j=1}^\infty \tau(\lambda^{-j}\xi)\), is a compactly supported refinable function in \(L^2(\mathbb{R}^n)\), where \(\tau\) serves as its refinement mask. 
		
		\label{remark : refinable function with TP}
	\end{remark}
	
	\begin{remark}
		In Section~\ref{section : refinable ftn with rationalTP}, we will demonstrate that when \(\tau\) is given as a rational trigonometric polynomial with \(\tau(0)=1\) and meeting the OEP condition~(\ref{eqn : OEP_condition}) with suitable~\(S\) and \(q_l,1 \leq l \leq L\) (as detailed in Proposition~\ref{prop : refinable ftn from rationalTP} and the subsequent remark), the distribution \(\phi_\tau\), defined as in Remark~\ref{remark : refinable function with TP}, is a refinable function in \(L^2(\mathbb{R}^n)\) with \(\tau\) as its refinement mask.
		This result can be regarded as a generalization of work done in Lemma 2.1 of \cite{BHan}, which is the case of trigonometric polynomial~\(\tau\).
		\label{remark : refinable function with rTP}
	\end{remark}
	
	We will use the aforementioned \(\phi_\tau\), derived from a given~\(\tau\), as the refinable function throughout the rest of this paper.
	
	\begin{definition}
		\begin{enumerate}[(a)]
			\item
			We refer to \(q_l\), where \(1 \leq l \leq L\), as a \textit{wavelet mask} if it satisfies the conditions specified either in Result~\ref{result : UEP} or Result~\ref{result : OEP}. 
			\item In addition, we refer to \((\tau, q_1, \cdots, q_L)\), where \(\tau\) is the refinement mask and \(q_l\) for \(1 \leq l \leq L\) are the  wavelet masks, as the \textit{combined MRA mask}.
			\item The \textit{wavelet system associated with this combined MRA mask}, denoted as \(X(\tau, q_1, \cdots, q_L)\), is defined as the wavelet system \(X(\psi^{(1)}, \cdots, \psi^{(L)})\), where \(\widehat{\psi^{(l)}}(\lambda\cdot) = q_l(\cdot) \widehat{\phi}_\tau(\cdot)\), \(1 \leq l \leq L\). 
		\end{enumerate}
	\end{definition}

	\subsection{Related works}
	\label{subsection : SOS-based construction}

	For a nonnegative (rational) trigonometric polynomial \(f\), we call that \(f\) \textit{has a sum of squares (SOS) representation} 
	if there exist (rational) trigonometric polynomials \(g_1, \cdots, g_L\) such that 
	\begin{equation*}
		f(\xi) = \sum_{l=1}^L \lvert g_l(\xi) \rvert^2, \quad \xi \in\mathbb{T}^n.
	\end{equation*}
	If such representation exists, we call \(g_l, 1 \leq l \leq L\), SOS generators.
	
	
	As briefly explained in the introduction, the specific form of wavelet masks that generate a TWF can be found through the SOS-based construction methods. There are two construction methods based on the SOS representation that we are interested in this paper.
	The first method assumes the sub-QMF condition and employs the unitary extension principle. For any given trigonometric polynomial~\(\tau\) satisfying \(\tau(0)=1\), the \textit{sub-QMF condition} is defined by
	\begin{equation}
		f_\tau(\xi):= 1- \sum_{\gamma\in\Gamma} \left| \tau \left( \xi + \gamma\right) \right|^2 \geq 0, \quad \xi\in\mathbb{T}^n.
		\label{eqn : sub-QMF}
	\end{equation}
	The second method works under the oblique sub-QMF condition by utilizing the oblique extension principle. For any given rational trigonometric polynomial \(\tau\) satisfying \(\tau(0) = 1\), and vanishing-moment recovery function~\(S\) that is a rational trigonometric polynomial, the \textit{oblique sub-QMF condition} is defined 
	by
	\begin{equation}
		f_{\tau,S}(\xi):=\frac{1}{S(\lambda \xi)} - \sum_{\gamma \in \Gamma} \frac{|\tau(\xi + \gamma)|^2}{S(\xi + \gamma)} \geq 0
		\label{eqn : oblique sub-QMF}
	\end{equation}
	for all \(\xi \in \mathbb{T}^n\) at which the rational trigonometric polynomial~\(f_{\tau,S}\) is defined.
	
	By finding a sum of squares representation of~\(f_\tau\) or \(f_{\tau,S}\), each construction method provides an exact form of combined MRA mask that constitutes a TWF.
	The detailed statements are given below respectively. Throughout the paper, we use the notation \(\Lambda\) to denote the set \(\lbrace0, \cdots, \lambda-1 \rbrace^n\), where \(\lambda\) is the dilation factor. The following result corresponds to the construction method derived from the SOS representation of \(f_\tau\),  specifically formulated for our setting.

	\begin{result}[\cite{BHan,LaiStockler}]
		Let \(\tau\) be a trigonometric polynomial satisfying \(\tau(0)=~1\).
		Suppose
		\begin{equation}
			f_\tau(\xi) = \sum_{l=1}^L |g_l(\lambda\xi)|^2, \quad \xi \in \mathbb{T}^n,
            \label{eqn : sub-QMF SOSv2}
		\end{equation}
		for some trigonometric polynomials \(g_1, \cdots, g_L\). For each \(\nu \in \Lambda\), define \(\tau_\nu\) by 
		\begin{equation}
			\tau_\nu(\xi) = \lambda^n \sum_{k \in \mathbb{Z}^n} h(\lambda k - \nu) e^{-i k \cdot \xi}, \quad \xi \in \mathbb{T}^n 
			\label{eqn : polyphase component}
		\end{equation}
		where \(\tau(\xi) = \sum_{k \in \mathbb{Z}^n} h(k)e^{-ik\cdot \xi}\).
		We define \(L+\lambda^n\) functions as follows:
		\begin{eqnarray*}
			q_{1,l}(\xi) & = & \tau(\xi) \overline{g_l(\lambda \xi)}, \quad l = 1, \cdots, L, \\
			q_{2,\nu}(\xi) & = & \lambda^{-n/2} \left( e^{i \nu \cdot \xi} - \tau(\xi) \overline{\tau_\nu (\lambda \xi)} \right), \quad \nu \in \Lambda.
		\end{eqnarray*}
		Then \((\tau, (q_{1,l}: l = 1, \cdots, L), (q_{2,\nu}: \nu \in \Lambda))\) satisfies the UEP condition~(\ref{eqn : UEP_condition}), and
		the wavelet system \(X(\tau, (q_{1,l}: l = 1, \cdots, L), (q_{2,\nu}: \nu \in \Lambda))\) is a compactly supported TWF for~\(L^2(\mathbb{R}^n)\).
		\label{result : LaiStockler subQMF}
	\end{result}
	
	\begin{remark*}
		We recall that the function \(\tau_\nu\) in (\ref{eqn : polyphase component}) is referred to as the \textit{polyphase component} of~\(\tau(\xi)=\sum_{k \in \mathbb{Z}^n} h(k)e^{-ik\cdot \xi}\), associated with \(\nu\in \Lambda\). In this case, they satisfy 
		\begin{equation*}
			\tau(\xi) = \lambda^{-n} \sum_{\nu\in\Lambda} \tau_\nu (\lambda \xi) e^{i\nu \cdot \xi}.
			\label{eqn : polyphase expression}
		\end{equation*}
	\end{remark*}
	
	The following result corresponds to the SOS-based construction assuming the oblique sub-QMF condition, though this particular formulation of the result is not explicitly presented. It is derived by integrating Theorems 1 and 2 of~\cite{[HLO]MultiTWFhighVM}, along with our Proposition~\ref{prop : refinable ftn from rationalTP} in Section~\ref{section : refinable ftn with rationalTP}. The original result in \cite{[HLO]MultiTWFhighVM} addresses the initiation of construction with a given refinable function and its associated refinement mask. However, for consistency, we have chosen to write the result beginning with a refinement mask, employing our Proposition~\ref{prop : refinable ftn from rationalTP} --- this proposition is used in the following result only for this --- rather than starting with a refinable function.

	\begin{result}[\cite{[HLO]MultiTWFhighVM}]
		Let \(\tau\) be a trigonometric polynomial satisfying \(\tau(0)=1\),
		and \(S\) be a rational trigonometric polynomial which is nonnegative on~\(\mathbb{T}^n\). Assume \(S\) is continuous at \(0\) with \(S(0) = 1\), and \(S, 1/S \in L^\infty (\mathbb{T}^n)\). Suppose  there are rational trigonometric polynomials \(g_l, 1 \leq l \leq L\), satisfying
		\begin{equation}
			f_{\tau,S}(\xi) = \sum_{l=1}^L |g_l(\lambda \xi)|^2, \quad \xi \in \mathbb{T}^n.
    		\label{eqn : oblique sub-QMF SOSv2}
		\end{equation}
		Furthermore, let~\(s_k, 1 \leq k \leq K\), be rational trigonometric polynomials with \(1/S(\xi) = \sum_{k=1}^K \lvert s_k(\xi) \rvert^2\) for all \(\xi \in \mathbb{T}^n\).
		We define \(L+\lambda^nK\) functions as
		\begin{align*}
			q_{1,l}(\xi) & = S(\lambda \xi) \tau(\xi) \overline{g_l(\lambda \xi)}, \quad 1 \leq l \leq L, \\
			q_{2,m}(\xi) & = S(\xi) a_m(\xi) - S(\lambda \xi) \tau(\xi) \sum_{\gamma \in \Gamma} \overline{\tau(\xi + \gamma)} a_m(\xi + \gamma), \quad 1 \leq m \leq \lambda^n K.
		\end{align*}
		Here, \(a_m(\xi) := \lambda^{-n/2} e^{i \nu_{l_m} \cdot \xi} s_{k_m}(\xi)\), where for each fixed \(m\), the integer \(k_m\) with \(1 \leq k_m \leq K\) is determined by \(m=\alpha_m K + k_m\) for some \(\alpha_m \in \mathbb{Z}\), and \(\nu_{l_m}\in \Lambda=:\{\nu_1, \cdots, \nu_{\lambda^n}\}\) is determined by \(m=\beta_m \lambda^n + l_m\) for some \(\beta_m \in \mathbb{Z}\) and \(1\le l_m\le \lambda^n\).
		
		Then \((\tau, (q_{1,l}: l = 1, \cdots, L), (q_{2,m}: m=1, \cdots,\lambda^nK))\) satisfies the OEP condition~(\ref{eqn : OEP_condition}), and the wavelet system \(X(\tau, (q_{1,l}: l = 1, \cdots, L), (q_{2,m}: m=1, \cdots,\lambda^nK))\) is a TWF for \(L^2(\mathbb{R}^n)\).
		
		\label{result : HLO oblique subQMF}
	\end{result}
	
	We recall some important quantities associated with a trigonometric polynomial \(\kappa\).
	\begin{definition}
		Let \(\kappa\) be a trigonometric polynomial. The \textit{accuracy number} of \(\kappa\) is defined to be the minimum order of the zeros of \(\kappa\) at \(\Gamma \setminus \{0\}\). The \textit{flatness number} of \(\kappa\) is the minimum order of the zeros of \(\kappa - 1\) at~\(0\). The \textit{vanishing moments} of \(\kappa\) is the minimum order of the zeros that \(\kappa\) has at \(0\).
	\end{definition}
	When \(\kappa\) is a rational trigonometric polynomial, we similarly define the quantities in the above definition, being careful to ensure that \( \kappa \) is well-defined at all relevant points.
	
	\medskip
	
	We now present some results about the vanishing moments of wavelet masks constructed in Results~\ref{result : LaiStockler subQMF} and~\ref{result : HLO oblique subQMF}, respectively. The following corresponds to the vanishing moments result about the construction under the sub-QMF condition. 
	\begin{result}[\cite{[HurZach]Interpolatory}]
		Assume the settings of Result~\ref{result : LaiStockler subQMF}. Suppose that \(\tau\) has accuracy number \(a \geq 1\), and flatness number \(b \geq 1\). Then, for each \(1 \leq l \leq L\), the wavelet mask \(q_{1,l}\) has exactly as many vanishing moments as \(g_l\), and the wavelet masks \(q_{2,\nu}, \nu\in \Lambda\), have at least \(\min\{a,b\}\) vanishing moments.
		\label{result : vanishing result of subQMF}
	\end{result}
	
	The following is the vanishing moments result concerning the construction under the oblique sub-QMF condition, which can be easily obtained from~\cite{[HLO]MultiTWFhighVM,[HurZach]Interpolatory}.
    
	
	\begin{result}[\cite{[HLO]MultiTWFhighVM,[HurZach]Interpolatory}]
		Assume the settings of Result~\ref{result : HLO oblique subQMF}. Suppose that \(\tau\) has accuracy number \(a\geq 1\) and \(f_{\tau,S}\) has vanishing moments \(c\geq 1\).
		Then, for each \(1 \leq l \leq L\), the wavelet mask \(q_{1,l}\) has exactly as many vanishing moments as~\(g_l\), and the wavelet masks \(q_{2,m}, 1 \leq m \leq \lambda^n K\), have at least \(\lfloor \min\{a, c/2\} \rfloor\) vanishing moments.
		\label{result : vanishing result of oblique subQMF}
	\end{result}
	
	\section{Construction of tight wavelet frames sharing responsibility}
	\label{section : our new construction}
	

	
	
	In the SOS-based construction methods outlined in Section~\ref{subsection : SOS-based construction}, it is crucial to identify (rational) trigonometric polynomials that yield a sum of squares representation for the function \(f_\tau\) of the sub-QMF condition, or \(f_{\tau,S}\) of the oblique sub-QMF condition. However, this task is nontrivial, especially in cases beyond one dimension.
	
	As each construction method begins with a given refinement mask, the entire challenge lies in determining the SOS generators, which are essential for the subsequent construction of wavelet masks. Hence, instead of initiating the whole process with a refinement mask, we propose simple conditions for both (rational) trigonometric polynomials, which can later serve as building blocks for refinement and wavelet masks. This approach distributes the burden more evenly between a refinement mask and the wavelet masks rather than placing the entire burden on the wavelet masks.
	
	The main results of this paper are detailed in the subsections that follow. In Section~\ref{subsection : our TWF using sub-QMF}, we demonstrate our new construction method, effective under the sub-QMF condition with a trigonometric polynomial refinement mask. On the other hand, in Section~\ref{subsection : our TWF using oblique subQMF}, we describe our construction method adapted for the oblique sub-QMF condition, applicable when employing a rational trigonometric polynomial as the refinement mask.
	
	
	\subsection{New tight wavelet frames with trigonometric polynomial refinement masks}
	
	\label{subsection : our TWF using sub-QMF}
	
	The following theorem demonstrates our new construction method that corresponds to the construction assuming the sub-QMF condition. Before presenting the statement, we note that, for simplicity, we restrict our construction method to the case where the resulting refinement mask is a trigonometric polynomial. However, this can be extended to the case of a rational trigonometric polynomial, as the unitary extension principle remains valid in such settings.

	\begin{theorem}
        Let \(N \leq \lambda^n\). For each \(1 \leq l \leq N\), suppose that there exist \(r_l \in \mathbb{N}\) and trigonometric polynomials \(p_l\) and \(g_l^{(r)}\), \(1 \leq r \leq r_l\), satisfying
        \begin{equation}
        	1 -  \left| p_l(\xi)\right|^2 = \lambda^n \sum_{r=1}^{r_l}\lvert g_l^{(r)}(\xi)\rvert^2, \quad \xi \in \mathbb{T}^n,
        	\label{eqn : our condition on p,g for subQMF}
        \end{equation}
        with \(p_l(0)=1\). In case of \(N<\lambda^n\), we set \(p_l\) a trigonometric polynomial satisfying \(\lvert p_l \rvert^2 = 1\) for \(l=N+1, \cdots, \lambda^n\).
        
        We define a trigonometric polynomial~\(\tau\) as
        \[
        \tau(\xi) = \lambda^{-n} \sum_{l=1}^{\lambda^n} p_l(\lambda \xi) e^{i \nu_l \cdot \xi},
        \]
        where \(\nu_1, \cdots, \nu_{\lambda^n}\) are distinct elements of \(\Lambda\), and define \((\sum_{l=1}^N r_l)+\lambda^n\) functions as follows:
        \begin{eqnarray*}
        	q_{1,l}^{(r)}(\xi) & = & \tau(\xi) \overline{g_l^{(r)}(\lambda \xi)}, \quad l = 1, \cdots, N,\,\, r = 1, \cdots, r_l,\\
        	q_{2,m}(\xi) & = & \lambda^{-n/2} \left( e^{i \nu_m \cdot \xi} - \tau(\xi) \overline{p_m(\lambda \xi)}\right), \quad m = 1, \cdots, \lambda^n.
        \end{eqnarray*}
        Then the wavelet system 
        \[
        X\left(\tau, (q_{1,l}^{(r)} : l = 1, \cdots, N, \; r = 1, \cdots, r_l), (q_{2,m} : m = 1, \cdots, \lambda^n)\right)
        \]
        is a compactly supported TWF for \(L^2(\mathbb{R}^n)\).
		\label{theorem: our subQMF construction}
	\end{theorem}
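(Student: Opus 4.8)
The plan is to derive Theorem~\ref{theorem: our subQMF construction} from Result~\ref{result : LaiStockler subQMF}: once we check that the trigonometric polynomial $\tau$ in the statement satisfies $\tau(0)=1$ and that $f_\tau$ admits the sum of squares representation (\ref{eqn : sub-QMF SOSv2}) with the family $\{g_l^{(r)}\}$ as SOS generators, it only remains to observe that our functions $q_{1,l}^{(r)}$ and $q_{2,m}$ are exactly the ones produced by that result, after relabeling its single index $l\in\{1,\dots,L\}$ as the pairs $(l,r)$ with $1\le l\le N$, $1\le r\le r_l$, so that $L=\sum_{l=1}^N r_l$ and the total count $L+\lambda^n$ matches $(\sum_{l=1}^N r_l)+\lambda^n$. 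The cited result then yields the compactly supported TWF conclusion, including the fact that $\phi_\tau$ is a well-defined compactly supported refinable function in $L^2(\mathbb{R}^n)$ via Remark~\ref{remark : refinable function with TP}.

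The first step is to identify the prescribed blocks $p_l$ with the polyphase components of $\tau$. Since $\nu_1,\dots,\nu_{\lambda^n}$ are distinct elements of $\Lambda$, they exhaust $\Lambda$; comparing $\tau(\xi)=\lambda^{-n}\sum_{l=1}^{\lambda^n}p_l(\lambda\xi)e^{i\nu_l\cdot\xi}$ with the polyphase expression recalled after Result~\ref{result : LaiStockler subQMF} and using uniqueness of that decomposition (distinct $\nu\in\Lambda$ give disjoint Fourier supports), we get $\tau_{\nu_l}=p_l$ for every $l$. In particular each $q_{2,m}$ of our statement coincides with $q_{2,\nu_m}$ of Result~\ref{result : LaiStockler subQMF}, and each $q_{1,l}^{(r)}$ with the corresponding $q_{1,\bullet}$ there. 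The second step is the polyphase identity
\[
\sum_{\gamma\in\Gamma}\lvert\tau(\xi+\gamma)\rvert^2=\lambda^{-n}\sum_{l=1}^{\lambda^n}\lvert p_l(\lambda\xi)\rvert^2,\qquad\xi\in\mathbb{T}^n,
\]
which follows by writing $\tau(\xi+\gamma)=\lambda^{-n}\sum_l p_l(\lambda\xi)e^{i\nu_l\cdot\xi}e^{i\nu_l\cdot\gamma}$ (using $2\pi$-periodicity of the $p_l$ and $\lambda\gamma\in 2\pi\mathbb{Z}^n$), expanding the square, and summing over $\gamma\in\Gamma$ via the orthogonality relation $\sum_{\gamma\in\Gamma}e^{i(\nu-\nu')\cdot\gamma}=\lambda^n\delta_{\nu,\nu'}$ for $\nu,\nu'\in\Lambda$.

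Combining these, and using that there are exactly $\lambda^n$ summands, I would compute
\[
f_\tau(\xi)=1-\sum_{\gamma\in\Gamma}\lvert\tau(\xi+\gamma)\rvert^2=\lambda^{-n}\sum_{l=1}^{\lambda^n}\bigl(1-\lvert p_l(\lambda\xi)\rvert^2\bigr)=\sum_{l=1}^{N}\sum_{r=1}^{r_l}\lvert g_l^{(r)}(\lambda\xi)\rvert^2,
\]
where the last equality uses hypothesis (\ref{eqn : our condition on p,g for subQMF}) (with $\xi$ replaced by $\lambda\xi$) for $1\le l\le N$ together with $1-\lvert p_l\rvert^2\equiv 0$ for $N<l\le\lambda^n$; this is precisely (\ref{eqn : sub-QMF SOSv2}). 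The normalization $\tau(0)=1$ follows from $\tau(0)=\lambda^{-n}\sum_{l=1}^{\lambda^n}p_l(0)$, since $p_l(0)=1$ for $1\le l\le N$ by hypothesis and the unimodular factors $p_l$ for $N<l\le\lambda^n$ are (normalized) monomials with $p_l(0)=1$, e.g.\ $p_l\equiv 1$. Feeding $\tau$ and the trigonometric polynomials $g_l^{(r)}$ into Result~\ref{result : LaiStockler subQMF} then completes the proof.

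I do not expect a genuine obstacle: the theorem is essentially a reorganization of the known sub-QMF/UEP construction in which the polyphase components of the refinement mask are prescribed directly, each carrying its own small SOS certificate, rather than being read off after fixing $\tau$. The only points requiring care are the bookkeeping matching our doubly-indexed families to the singly-indexed families of Result~\ref{result : LaiStockler subQMF}, the verification that the prescribed $p_l$ are indeed the polyphase components $\tau_{\nu_l}$ (so that the $q$'s genuinely agree), and the harmless normalization ensuring $\tau(0)=1$; the polyphase identity relating $\sum_{\gamma\in\Gamma}\lvert\tau(\xi+\gamma)\rvert^2$ to the $p_l$ is the routine computation carrying the argument.
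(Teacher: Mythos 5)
Your proposal is correct and follows essentially the same route as the paper: compute $\sum_{\gamma\in\Gamma}\lvert\tau(\xi+\gamma)\rvert^2=\lambda^{-n}\sum_{l}\lvert p_l(\lambda\xi)\rvert^2$ via the orthogonality relation, combine with hypothesis (\ref{eqn : our condition on p,g for subQMF}) to get the SOS identity (\ref{eqn : sub-QMF SOSv2}), and invoke Result~\ref{result : LaiStockler subQMF}. Your added checks (that the $p_l$ are indeed the polyphase components $\tau_{\nu_l}$, and the $\tau(0)=1$ normalization) are points the paper only treats implicitly or in the remark following its proof, so they are welcome but do not change the argument.
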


	\begin{proof}
		Since \(1- |p_l(\xi)|^2 = \lambda^n \sum_{r=1}^{r_l}|g_{l}^{(r)}(\xi)|^2\) holds for every \(1 \leq l \leq N\), we see that 
		\begin{equation*}
			\sum_{l=1}^N \sum_{r=1}^{r_l}|g_l^{(r)}(\lambda\xi)|^2 = \lambda^{-n} \sum_{l=1}^{\lambda^n} \left(1- |p_l(\lambda\xi)|^2 \right) = 1- \lambda^{-n} \sum_{l=1}^{\lambda^n} |p_l(\lambda\xi)|^2.
			\label{eqn : sum of our sub-QMF}
		\end{equation*}
		Furthermore, by the definition of \(\tau\), we also see that 
		\begin{eqnarray*}
			\sum_{\gamma \in \Gamma} \left| \tau \left(\xi + \gamma\right) \right|^2 & = & \lambda^{-2n} \sum_{l=1}^{\lambda^n} \sum_{\tilde{l}=1}^{\lambda^n} p_l(\lambda\xi) \overline{p_{\tilde{l}}(\lambda\xi)} e^{i(\nu_l - \nu_{\tilde{l}})\cdot \xi} \sum_{\gamma \in \Gamma} e^{i (\nu_l - \nu_{\tilde{l}}) \cdot \gamma} \\
			& = & \lambda^{-n}\sum_{l=1}^{\lambda^n} |p_l(\lambda\xi)|^2.
		\end{eqnarray*}
		Here, we use the \(2\pi\)-periodicity of trigonometric polynomials \(p_l\) and \(p_{\tilde{l}}\), and the following property: 
		\begin{equation*}
			\sum_{\gamma \in \Gamma} e^{i \nu \cdot \gamma} = 
			\begin{cases} 
				\lambda^n, & \text{if } \nu = 0, \\
				0, & \text{if }\nu \in\{\mu-\tilde{\mu}: \mu, \tilde{\mu}\in \Lambda\}  \setminus \{0\}.
			\end{cases}
		\end{equation*}
		Thus, we have \(1-\sum_{\gamma \in \Gamma} \left| \tau(\xi + \gamma)\right|^2 = \sum_{l=1}^N \sum_{r=1}^{r_l} |g_{l}^{(r)}(\lambda\xi)|^2\), and this completes the proof by invoking Result~\ref{result : LaiStockler subQMF}.
	\end{proof}
    
	We observe the functions \(p_l\), \(l=1,\cdots,\lambda^n\), in the above theorem take the role of polyphase components \(\tau_{\nu_l}\), \(l=1,\cdots,\lambda^n\), as~\(p_l\) are used to define \(\tau\) by
	\[\tau(\xi) = \lambda^{-n} \sum_{l=1}^{\lambda^n} p_l(\lambda \xi) e^{i \nu_l \cdot \xi},\] 
	whereas \(\tau_{\nu_l}\) satisfy 
	\[\tau(\xi) = \lambda^{-n} \sum_{l=1}^{\lambda^n} \tau_{\nu_l}(\lambda \xi) e^{i \nu_l \cdot \xi}\] 
	for any refinement mask \(\tau\) (see the remark right after Result~\ref{result : LaiStockler subQMF}).

    While constructing examples in \cite[Chapter 4]{CPSS}, the following expression, which is also a central component of our result, is employed (presented here in our normalization). For a given refinement mask \(\tau\) with corresponding polyphase components \(\tau_{\nu_l}, 1 \leq l \leq \lambda^n\), we have
    \begin{equation*}
        f_{\tau}(\xi) = 1 - \sum_{\gamma \in \Gamma} \lvert \tau(\xi + \gamma) \rvert^2 = 1 - \lambda^{-n} \sum_{l=1}^{\lambda^n} \lvert \tau_{\nu_l}(\lambda \xi) \rvert^2 = \lambda^{-n} \sum_{l=1}^{\lambda^n} \left( 1 - \lvert \tau_{\nu_l}(\lambda \xi)\rvert^2\right).
    \end{equation*}
    To utilize the UEP construction based on a sum-of-squares (SOS) representation, it is essential to verify the sub-QMF condition \(f_\tau \geq 0\). In \cite{CPSS}, it is noted that this verification can be done by finding an SOS representation for each polynomial \(1- \lvert \tau_{\nu_l}(\lambda\xi)\rvert^2, 1 \leq l \leq \lambda^n\), assuming all of them are nonnegative--an approach that aligns with our framework.

    However, a key distinction in our work lies in how the refinement mask \(\tau\) is constructed. In the method of \cite{CPSS}, once \(\tau\) is specified, each polyphase component \(\tau_{\nu_l}\) is determined by a corresponding exponential \(e^{i\nu_l \cdot \xi}\). In contrast, our approach begins with the building blocks \(p_l\), leaving us free to select the exponentials \(e^{i\nu_l \cdot \xi}\) independently, thus providing greater flexibility in designing \(\tau\). In the example section below, we illustrate how this design flexibility can be utilized, though within the scope of a relatively simple example.

    Moreover, while \cite{CPSS} mentions that it suffices to find an SOS representation for each polynomial \(1 - |\tau_{\nu_l}(\lambda \xi)|^2\), the examples therein primarily rely, in practice, on checking nonnegativity to verify the sub-QMF condition, rather than explicitly constructing such representations. In contrast, our approach aims to construct SOS representations for all these polynomials. Furthermore, since the polynomials in \cite{CPSS} are determined by the given refinement mask \(\tau\), the burden of finding SOS representations falls entirely on the other parts. To alleviate this, we introduce building blocks from which both refinement and wavelet masks are subsequently derived. This allows the necessary conditions to be distributed more evenly across the building blocks, rather than being concentrated on a single type of component.

	We now present a result about the vanishing moments of wavelet masks constructed in Theorem~\ref{theorem: our subQMF construction}. This is straightforward from Result~\ref{result : vanishing result of subQMF}.
	\begin{corollary}
		Assume the settings of Theorem~\ref{theorem: our subQMF construction}. Suppose that \(\tau\) has accuracy number \(a \geq 1\), and flatness number \(b\geq 1\). Then, for each \(1 \leq l \leq N\) and \(1 \leq r \leq r_l\), the wavelet mask \(q_{1,l}^{(r)}\) has exactly as many vanishing moments as \(g_l^{(r)}\), and the wavelet masks \(q_{2,m}, 1 \leq m \leq \lambda^n\), have at least \(\min\{a,b\}\) vanishing moments.
		\label{corollary : vanishing result our subQMF}
	\end{corollary}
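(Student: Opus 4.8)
The plan is to reduce the corollary directly to Result~\ref{result : vanishing result of subQMF}, exactly as the hint suggests, by matching the data of Theorem~\ref{theorem: our subQMF construction} with the data of Result~\ref{result : LaiStockler subQMF}. The proof of Theorem~\ref{theorem: our subQMF construction} already establishes the identity $f_\tau(\xi)=\sum_{l=1}^{N}\sum_{r=1}^{r_l}|g_l^{(r)}(\lambda\xi)|^2$; after flattening the pair $(l,r)$ with $1\le l\le N$, $1\le r\le r_l$, into a single index, this is precisely the hypothesis~(\ref{eqn : sub-QMF SOSv2}) of Result~\ref{result : LaiStockler subQMF}, with the $g_l^{(r)}$ playing the role of the SOS generators $g_l$ there. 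Under the same flattening, $q_{1,l}^{(r)}(\xi)=\tau(\xi)\overline{g_l^{(r)}(\lambda\xi)}$ is literally the mask $q_{1,l}$ of Result~\ref{result : LaiStockler subQMF}.

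Next I would check that the masks $q_{2,m}$ of Theorem~\ref{theorem: our subQMF construction} coincide with the masks $q_{2,\nu}$ of Result~\ref{result : LaiStockler subQMF}. Since $\nu_1,\dots,\nu_{\lambda^n}$ are distinct elements of $\Lambda$ and $|\Lambda|=\lambda^n$, they enumerate $\Lambda$; comparing the defining relation $\tau(\xi)=\lambda^{-n}\sum_{l=1}^{\lambda^n}p_l(\lambda\xi)e^{i\nu_l\cdot\xi}$ with the polyphase expansion $\tau(\xi)=\lambda^{-n}\sum_{\nu\in\Lambda}\tau_\nu(\lambda\xi)e^{i\nu\cdot\xi}$ and invoking uniqueness of the polyphase decomposition gives $\tau_{\nu_l}=p_l$ for every $l$ (the observation recorded in the remark following Theorem~\ref{theorem: our subQMF construction}). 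Hence $q_{2,m}(\xi)=\lambda^{-n/2}\bigl(e^{i\nu_m\cdot\xi}-\tau(\xi)\overline{p_m(\lambda\xi)}\bigr)=\lambda^{-n/2}\bigl(e^{i\nu_m\cdot\xi}-\tau(\xi)\overline{\tau_{\nu_m}(\lambda\xi)}\bigr)$, which is exactly the mask $q_{2,\nu_m}$ in Result~\ref{result : LaiStockler subQMF}, and the full family $\{q_{2,m}:1\le m\le\lambda^n\}$ is the family $\{q_{2,\nu}:\nu\in\Lambda\}$.

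With these identifications in place, the settings of Result~\ref{result : LaiStockler subQMF} are in force, so Result~\ref{result : vanishing result of subQMF} applies without change: from the hypotheses that $\tau$ has accuracy number $a\ge1$ and flatness number $b\ge1$, each $q_{1,l}^{(r)}$ has exactly as many vanishing moments as $g_l^{(r)}$, and each $q_{2,m}$ has at least $\min\{a,b\}$ vanishing moments, which is the assertion of the corollary. I do not expect a genuine obstacle here; the only point requiring care is the index bookkeeping that turns the doubly indexed objects $g_l^{(r)},q_{1,l}^{(r)}$ of the theorem into the singly indexed ones of Results~\ref{result : LaiStockler subQMF} and~\ref{result : vanishing result of subQMF}, together with the (already noted) fact that the $p_l$ are the polyphase components of the constructed $\tau$.
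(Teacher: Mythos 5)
Your reduction is correct and is exactly the paper's route: the paper simply notes the corollary is straightforward from Result~\ref{result : vanishing result of subQMF}, relying on the identity $f_\tau(\xi)=\sum_{l,r}\lvert g_l^{(r)}(\lambda\xi)\rvert^2$ established in the proof of Theorem~\ref{theorem: our subQMF construction} and on the fact that the $p_l$ are the polyphase components $\tau_{\nu_l}$ of the constructed $\tau$. Your extra bookkeeping (flattening the double index and checking $q_{2,m}=q_{2,\nu_m}$ via uniqueness of the polyphase decomposition) just makes that reduction explicit.
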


    \subsubsection{Examples}
    \label{subsubsection: examples of our sub-QMF method}

    In this subsection, we present some examples constructed according to Theorem~\ref{theorem: our subQMF construction}. 
    
    Before presenting examples, we provide necessary and sufficient conditions under which the wavelet masks \(q_{2,m}, 1 \leq m \leq \lambda^n\), constructed in Theorem~\ref{theorem: our subQMF construction}, possess two vanishing moments. While Corollary~\ref{corollary : vanishing result our subQMF} presents a simple criterion for verifying the vanishing moments of wavelet masks \(q_{2,m}\),  it only yields a lower bound. Since our construction makes the refinement mask \(\tau\) directly from the building blocks~\(p_l\), it may be advantageous, whenever possible, to analyze the vanishing moments directly in terms of the \(p_l\)'s, rather than through the resulting refinement mask \(\tau\). The precise statement of the result is given below, while its proof is deferred to~\ref{section: appendix}. We also provide a specific example (see Example~\ref{example: Theorem 3.1 vanishing 2})
    by considering the conditions on \(p_l\) and \(\nu_l\) established below.

    \begin{proposition}
        Let \(\alpha\in\mathbb{N}_0^n\) be a multi-index with \(\lvert \alpha \rvert =1\). Then \(D^{\alpha} q_{2,m}(0)= 0\) for all \(1 \leq m \leq \lambda^n\) if and only if 
        \begin{equation*}
            A\begin{bmatrix}
                D^{\alpha}p_1(0)\\
                \vdots\\
                D^{\alpha}p_{\lambda^n}(0)
            \end{bmatrix} = \left(-\frac{i}{\lambda} \right) A \begin{bmatrix}
                \nu_1^\alpha\\
                \vdots\\
                \nu_{\lambda^n}^\alpha
            \end{bmatrix},\quad\hbox{where}\quad A:=\begin{bmatrix}
            1 & 0 & \cdots & 0 & -1\\
            0 & 1 & \cdots & 0 & -1\\
            \vdots & \vdots & \ddots & \vdots & \vdots\\
            0 & 0 & \cdots & 1 & -1
        \end{bmatrix}
        \end{equation*}
        is of size \((\lambda^n-1)\times\lambda^n\).
        \label{prop: vanishing moment for q2}
    \end{proposition}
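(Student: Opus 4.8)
The plan is to compute $D^\alpha q_{2,m}(0)$ explicitly for each $m$, set all these derivatives to zero, and then massage the resulting linear system into the stated matrix form. Since $|\alpha|=1$, write $\alpha = e_j$ for some coordinate direction $j$, so $D^\alpha$ is a single partial derivative. First I would differentiate the defining formula
\[
q_{2,m}(\xi) = \lambda^{-n/2}\!\left( e^{i\nu_m\cdot\xi} - \tau(\xi)\,\overline{p_m(\lambda\xi)}\right)
\]
using the product rule, then evaluate at $\xi = 0$. This requires knowing $\tau(0)$, $D^\alpha\tau(0)$, $p_m(0)$, and $D^\alpha p_m(0)$. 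From $\tau(\xi) = \lambda^{-n}\sum_{l=1}^{\lambda^n} p_l(\lambda\xi)e^{i\nu_l\cdot\xi}$ and the normalizations $p_l(0)=1$ (using $|p_l|^2=1$ at $0$ for $l>N$, which forces $|p_l(0)|=1$; one should check the construction pins down $p_l(0)=1$ or adjust by a unimodular constant), we get $\tau(0) = \lambda^{-n}\sum_l e^{i\nu_l\cdot 0} = 1$. Differentiating $\tau$ gives
\[
D^\alpha\tau(0) = \lambda^{-n}\sum_{l=1}^{\lambda^n}\left(\lambda\, D^\alpha p_l(0) + i\,\nu_l^\alpha\right),
\]
where $\nu_l^\alpha = \nu_l\cdot e_j$ is the relevant component of $\nu_l$.

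Next I would assemble $D^\alpha q_{2,m}(0)$. The product rule yields
\[
\lambda^{n/2}D^\alpha q_{2,m}(0) = i\,\nu_m^\alpha - D^\alpha\tau(0)\cdot\overline{p_m(0)} - \tau(0)\cdot\overline{\lambda\, D^\alpha p_m(0)},
\]
and since $p_m(0)=1$, $\tau(0)=1$, and the derivative $\overline{D^\alpha p_m(0)}$ — here I should be careful whether $D^\alpha$ of the conjugate equals the conjugate of $D^\alpha$; since $\overline{p_m(\lambda\xi)}$ as a function of real $\xi$ has derivative $\lambda\,\overline{(D^\alpha p_m)(\lambda\xi)}$, at $\xi=0$ this is $\lambda\,\overline{D^\alpha p_m(0)}$ — the condition $D^\alpha q_{2,m}(0)=0$ becomes
\[
i\,\nu_m^\alpha - D^\alpha\tau(0) - \lambda\,\overline{D^\alpha p_m(0)} = 0, \qquad m=1,\dots,\lambda^n.
\]
Since $D^\alpha\tau(0)$ does not depend on $m$, subtracting the $\lambda^n$-th equation from the $m$-th for $m=1,\dots,\lambda^n-1$ eliminates $D^\alpha\tau(0)$ and gives
\[
i(\nu_m^\alpha - \nu_{\lambda^n}^\alpha) - \lambda\left(\overline{D^\alpha p_m(0)} - \overline{D^\alpha p_{\lambda^n}(0)}\right) = 0,
\]
which rearranges to $\overline{D^\alpha p_m(0)} - \overline{D^\alpha p_{\lambda^n}(0)} = \tfrac{i}{\lambda}(\nu_m^\alpha - \nu_{\lambda^n}^\alpha)$. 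Taking conjugates (noting $\nu_m^\alpha$ is real, so only the sign of $i$ flips) gives exactly the rows of $A\,[D^\alpha p_1(0),\dots,D^\alpha p_{\lambda^n}(0)]^T = -\tfrac{i}{\lambda}A\,[\nu_1^\alpha,\dots,\nu_{\lambda^n}^\alpha]^T$. Conversely, if this system holds, then all $\lambda^n-1$ differences vanish appropriately; combined with the fact that the full system of $\lambda^n$ equations $D^\alpha q_{2,m}(0)=0$ has a built-in consistency relation (their sum is automatically satisfied, since $\sum_m D^\alpha q_{2,m}(0) = \lambda^{n/2}\sum_m$ of the left side, and one checks $\sum_m(i\nu_m^\alpha - D^\alpha\tau(0) - \lambda\overline{D^\alpha p_m(0)})$ vanishes by the formula for $D^\alpha\tau(0)$ — wait, it equals $\lambda^n\cdot(-D^\alpha\tau(0)) + i\sum\nu_m^\alpha - \lambda\sum\overline{D^\alpha p_m(0)}$, and since $D^\alpha\tau(0) = \lambda^{-n}(\lambda\sum D^\alpha p_l(0) + i\sum\nu_l^\alpha)$, the sum is $-\lambda\sum D^\alpha p_l(0) - i\sum\nu_l^\alpha + i\sum\nu_m^\alpha - \lambda\sum\overline{D^\alpha p_m(0)}$, which is not obviously zero unless $D^\alpha p_l(0)$ is real — so the consistency must instead come from the reverse direction being genuinely weaker, i.e. one recovers all $\lambda^n$ equations from the $\lambda^n-1$ differences together with the single equation for $m=\lambda^n$, which is forced by... ) this needs care. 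I would instead argue the reverse implication by showing the $\lambda^n-1$ difference equations plus the definition of $\tau$ pin down $D^\alpha\tau(0)$ self-consistently so that the $m=\lambda^n$ equation, and hence all of them, hold.

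The main obstacle I anticipate is precisely this reverse direction: the stated matrix equation has only $\lambda^n-1$ rows, while the condition ``$D^\alpha q_{2,m}(0)=0$ for all $m$'' is $\lambda^n$ equations, so I must verify that the apparently lost equation is automatically implied. The resolution should be that $D^\alpha\tau(0)$ is itself a function of the $D^\alpha p_l(0)$ via the definition of $\tau$, so the $m$-th full equation reads $\lambda\overline{D^\alpha p_m(0)} = i\nu_m^\alpha - \lambda^{-n}\sum_l(\lambda D^\alpha p_l(0) + i\nu_l^\alpha)$; summing over $m$ and using that this is consistent forces a compatibility condition on the $\mathrm{Im}(D^\alpha p_l(0))$. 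I expect the cleanest route is: the $\lambda^n$ full equations are equivalent to the $\lambda^n-1$ differences \emph{plus} one scalar equation expressing $\mathrm{Re}(D^\alpha\tau(0))$ or $\sum_l\mathrm{something}$; and that scalar equation, upon substituting the formula for $D^\alpha\tau(0)$, becomes a tautology $0=0$. Verifying that tautology carefully — tracking real versus imaginary parts and the conjugations — is the one genuinely delicate computation; everything else is bookkeeping with the product rule.
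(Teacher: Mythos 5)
Your derivative computation and the forward direction are fine (and your care with the conjugate, writing $\lambda\overline{D^\alpha p_m(0)}$ rather than silently replacing it, is legitimate). The genuine gap is the reverse implication, which you leave unfinished, and your hoped-for resolution --- that the one leftover scalar equation ``becomes a tautology $0=0$'' --- is not correct as stated. Writing $E_m:=i\nu_m^\alpha-D^\alpha\tau(0)-\lambda\overline{D^\alpha p_m(0)}$, the full condition is equivalent to the $\lambda^n-1$ differences (your $A$-system, after conjugation) \emph{together with} $\sum_m E_m=0$, and substituting the formula for $D^\alpha\tau(0)$ gives $\sum_m E_m=-2\lambda\sum_l \mathrm{Re}\,D^\alpha p_l(0)$, which is not an identity: with freely chosen complex data satisfying only the $A$-system (e.g.\ $\lambda=2$, $n=1$, $D^\alpha p_1(0)=a$, $D^\alpha p_2(0)=a+i/2$ with $\mathrm{Re}\,a\neq0$) some $D^\alpha q_{2,m}(0)$ stays nonzero. (Also, your parenthetical ``unless $D^\alpha p_l(0)$ is real'' has the condition backwards: what is needed is that each $D^\alpha p_l(0)$ is \emph{purely imaginary}.) The missing ingredient is structural, not linear-algebraic: in the setting of Theorem~\ref{theorem: our subQMF construction}, (\ref{eqn : our condition on p,g for subQMF}) gives $|p_l(\xi)|^2\le1$ (and $|p_l|^2\equiv1$ for $l>N$) with $p_l(0)=1$, so $|p_l|^2$ has a maximum at $0$ and its first-order derivatives vanish there, i.e.\ $2\,\mathrm{Re}\bigl(\overline{p_l(0)}\,D^\alpha p_l(0)\bigr)=0$, hence $\overline{D^\alpha p_m(0)}=-D^\alpha p_m(0)$. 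The paper secures the same fact via its conjugation lemma (Lemma~\ref{lemma:lemma for conjugate}, real coefficients) and substitutes it \emph{before} any elimination.

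Once the conjugates are eliminated this way, the clean finish --- which is the paper's --- is to rewrite $D^\alpha q_{2,m}(0)=0$ as $\sum_l D^\alpha p_l(0)-\lambda^n D^\alpha p_m(0)=-\tfrac{i}{\lambda}\bigl(\sum_l\nu_l^\alpha-\lambda^n\nu_m^\alpha\bigr)$ for each $m$, i.e.\ $Bx=(-i/\lambda)B\nu$ with $B=J-\lambda^n I$ ($J$ the all-ones matrix). Since the rows of $B$ sum to zero and $\ker B=\ker A=\mathrm{span}\{(1,\dots,1)^{T}\}$ (equivalently, $B$ is row equivalent to $A$ with a zero row appended), this $\lambda^n$-row system is equivalent to the $(\lambda^n-1)$-row $A$-system, and both directions follow at once with no surviving compatibility condition. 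So: keep your computation, add the purely-imaginary observation (or the real-coefficient hypothesis) to remove the conjugates, and the ``if'' direction becomes immediate; as written, your proposal does not establish it.
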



    We now present our examples constructed using Theorem~\ref{theorem: our subQMF construction}. In particular, when \(r_l=1\) for all \(1 \leq l \leq N\), we denote \(g_l:=g_l^{(r)}\) and \(q_{1,l}:=q_{1,l}^{(r)}\). We begin by constructing an example based on Proposition~\ref{prop: vanishing moment for q2}.
    
    \begin{example}[Construction based on Proposition~\ref{prop: vanishing moment for q2}]
        \label{example: Theorem 3.1 vanishing 2}
        Let \(\lambda = 2\), \(n = 2\) and \(N=4\). For every \(1 \leq l \leq 4\), we set 
        \begin{equation*}
            p_l(\xi):= (1 + e^{-i(a_l \xi_1 + b_l\xi_2)})/2,\quad \hbox{for some \(a_l, b_l \in \mathbb{R}\)}.
        \end{equation*}
        Then, for \(1 \leq l \leq 4\), we get
        \(D^{(1,0)}p_l(0) = -(a_l/2)i \) and \(D^{(0,1)}p_l(0) = -(b_l/2)i.\)
        By setting \(\nu_1 = (1,1), \nu_2 = (1,0), \nu_3 = (0,1),\) and \(\nu_4 = (0,0)\), it suffices to find pairs \(\{(a_l,b_l):1 \le l \leq 4\}\) that satisfy the following conditions in order to ensure that the corresponding wavelet masks \(q_{2,m}, 1 \leq m \leq 4\), have two vanishing moments:
       \begin{align*}
            \left\{
            \begin{aligned}
                & D^{(1,0)}p_1(0) - D^{(1,0)}p_4(0) = -i(a_1 - a_4)/2 = -i/2,\\
                & D^{(1,0)}p_2(0) - D^{(1,0)}p_4(0) = -i(a_2 - a_4)/2 = -i/2,\\
                & D^{(1,0)}p_3(0) - D^{(1,0)}p_4(0) = -i(a_3 - a_4)/2 = 0,
            \end{aligned}
            \right.
            \\
            \left\{
            \begin{aligned}
                & D^{(0,1)}p_1(0) - D^{(0,1)}p_4(0) = -i(b_1 - b_4)/2 = -i/2,\\
                & D^{(0,1)}p_2(0) - D^{(0,1)}p_4(0) = -i(b_2 - b_4)/2 = 0,\\
                & D^{(0,1)}p_3(0) - D^{(0,1)}p_4(0) = -i(b_3 - b_4)/2 = -i/2.
            \end{aligned}
            \right.
        \end{align*}
        It is easy to see that each pair is determined once \((a_4,b_4)\) is fixed. By choosing \((a_4,b_4)=(1,0)\), we obtain \((a_1,b_1)=(2,1), (a_2,b_2)=(2,0)\) and \((a_3,b_3)=(1,1)\). This determines our \(p_l\)'s as follows:
        \begin{align*}
            &p_1(\xi) = (1+e^{-i(2\xi_1 + \xi_2)})/2,  \quad p_2(\xi) = (1+e^{-2i\xi_1})/2,\\
            &p_3(\xi) = (1+e^{-i(\xi_1 + \xi_2)})/2, \quad p_4(\xi) = (1+e^{-i\xi_1})/2.
        \end{align*}
          Then we see that \(\tau(\xi)\) equals to 
        \[\frac{1}{4} \left( \frac{e^{i(\xi_1 + \xi_2)}+ e^{i(-3\xi_1 - \xi_2)}}{2} + \frac{e^{i\xi_1}+ e^{-3i\xi_1}}{2} \right.
            \left. + \frac{e^{i\xi_2}+ e^{i(-2\xi_1 - \xi_2)}}{2} + \frac{1 + e^{-2i\xi_1}}{2}\right)\]
        and the corresponding \(g_l\)'s are given as follows:
        \begin{align*}
            &g_1(\xi) = (1-e^{-i(2\xi_1 + \xi_2)})/2,  \quad g_2(\xi) = (1-e^{-2i\xi_1})/2,\\
            &g_3(\xi) = (1-e^{-i(\xi_1 + \xi_2)}/2, \quad g_4(\xi) = (1-e^{-i\xi_1}/2.
        \end{align*}
        We note that the wavelet mask \(q_{2,m}, 1\leq m \leq 4\), has two vanishing moments by Proposition~\ref{prop: vanishing moment for q2}. Also, for each \(1 \leq l \leq 4\), the wavelet mask \(q_{1,l}\) has one vanishing moment since \(g_l\) has one vanishing moment. 

        This example underscores that the accuracy and flatness numbers of \(\tau\) may not always be appropriate indicators of the vanishing moments of \(q_{2,m}\) within our construction of Theorem~\ref{theorem: our subQMF construction}. Indeed, the flatness of \(\tau\) is \(1\), which implies that the wavelet masks \(q_{2,m}, 1 \leq m \leq 4\), have at least one vanishing moment from Corollary~\ref{corollary : vanishing result our subQMF}. However, from the above discussion, we know that the exact vanishing moments of \(q_{2,m}\)'s are in fact two. \qed
    \end{example}
	
	\begin{example}[Example from \cite{[Hur]TWFPD} in our construction setup]
		For \(1 \leq l \leq N \leq \lambda^n\), we consider an example in \cite{[Hur]TWFPD} by taking \(g_l(\xi)= \lambda^{-n/2} ((1-e^{-i\omega_l \cdot \xi})/2)^{m_l}\), where \(m_l\in \mathbb{N}\) is the vanishing moment,
		and \(\omega_l \in \mathbb{Z}^n\) is the direction vector with the initial point zero. 
		
		Let \(\lambda=2, n=2\) and \(N=3\) as in Example 4.1 of \cite{[Hur]TWFPD}. The vanishing moment \(m_l, 1 \leq l \leq N\), is all chosen by \(1\), and the direction vectors are chosen by 
		\begin{equation*}
			\begin{bmatrix}
				\omega_1 & \omega_2 & \omega_3
			\end{bmatrix} = 
			\begin{bmatrix}
				1 & 0 & 1\\
				0 & 1 & 1
			\end{bmatrix}.
		\end{equation*}
		
		Then for each \(1 \leq l \leq 3\), \(g_l\) is \((1-e^{-i \omega_l \cdot \xi})/4\), and \(p_l\) can be found as \((1+e^{-i \omega_l \cdot \xi})/2\), which satisfies the condition~(\ref{eqn : our condition on p,g for subQMF}), from Fej\'{e}r-Riesz lemma (see Lemma~6.1.3 in \cite{[Daubechies]TenLectures}). Furthermore, we set \(p_4 \equiv 1\). By setting \(\nu_l = \omega_l\) for \(1 \leq l \leq 3\) and \(\nu_4=0\), we have \(\tau\) as
		\begin{equation*}
			\tau(\xi) = \frac{1}{4} \left( \frac{e^{i\xi_1}+e^{-i\xi_1}}{2} + \frac{e^{i \xi_2} + e^{-i\xi_2}}{2} + \frac{e^{i(\xi_1+\xi_2)} + e^{-i(\xi_1 + \xi_2)}}{2} + 1\right).
		\end{equation*}
		As demonstrated in Example 4.1 of \cite{[Hur]TWFPD}, both the accuracy and flatness numbers of \(\tau\) are \(2\), indicating that \(q_{2,m}, 1\leq m \leq 4,\) have at least two vanishing moments. Additionally, for each \(1 \leq l \leq 3\), the wavelet mask \(q_{1,l}\) has exactly one vanishing moment since \(g_l\) has one vanishing moment. Thus we get a compactly supported TWF for \(\lambda=3\) and \(n=2\), consisting of~\(7\) mother wavelets with at least \(1\) vanishing moment. \qed
		
		\label{example:thm3_1_directional}
	\end{example}
	
	%
	%
	
	Moreover, we can identify a function \(p\) that pairs with a generalized form~\(g\) of the function \(g_l\), satisfying condition (\ref{eqn : our condition on p,g for subQMF}) by applying the Fej\'{e}r-Riesz lemma, rather than using the specific pair \((p_l, g_l)\) in the above Example~\ref{example:thm3_1_directional}. We defer its proof to~\ref{section: appendix}.
	
	
	
	
	\begin{lemma}
		Let \(g\) be a trigonometric polynomial on \(\mathbb{T}^n\), defined by \(g(\xi) = G(\xi \cdot \omega)\), where \(G\) is a univariate trigonometric polynomial on \(\mathbb{T}\) with \(G(0) =~0\), and \(\omega\)  is a fixed element of \(\mathbb{Z}^n\). If \(1-\lambda^n \lvert g(\xi) \rvert \geq 0\) for all \(\xi \in \mathbb{T}^n\), then there is a trigonometric polynomial \(p(\xi) = P(\xi \cdot \omega)\) on \(\mathbb{T}^n\) for some univariate trigonometric polynomial \(P\) on \(\mathbb{T}\), satisfying \(\lvert p(\xi) \rvert^2 = 1 - \lambda^n \lvert g(\xi) \rvert^2\) for all \(\xi \in \mathbb{T}^n\).
		
		\label{lemma : FejerRieszlemma}
	\end{lemma}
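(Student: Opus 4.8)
The plan is to reduce this multivariate statement to the classical one‑variable Fej\'{e}r--Riesz lemma by passing to the univariate ``profile'' of \(g\). First I would dispose of the degenerate case \(\omega = 0\): then \(g \equiv G(0) = 0\), so \(1 - \lambda^n \lvert g(\xi)\rvert^2 \equiv 1\) and one may simply take \(P \equiv 1\), giving \(p(\xi) = P(\xi\cdot\omega) \equiv 1\). Hence assume \(\omega \neq 0\) from now on.

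Next I would introduce the univariate trigonometric polynomial \(F(t) := 1 - \lambda^n \lvert G(t)\rvert^2\) on \(\mathbb{T}\); this is genuinely a (Laurent) trigonometric polynomial, since for real \(t\) the conjugate \(\overline{G(t)}\) is again a trigonometric polynomial in \(t\), hence so is \(\lvert G(t)\rvert^2 = G(t)\overline{G(t)}\), which is moreover real-valued on \(\mathbb{T}\). The crux is to check that \(F(t) \geq 0\) for every \(t \in \mathbb{R}\). Given \(t\), since \(\omega \neq 0\) choose an index \(j\) with \(\omega_j \neq 0\) and set \(\xi := (t/\omega_j)\,e_j \in \mathbb{R}^n\), so that \(\xi\cdot\omega = t\) and \(g(\xi) = G(t)\). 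The hypothesis \(1 - \lambda^n\lvert g(\xi)\rvert \geq 0\) yields \(\lvert G(t)\rvert \leq \lambda^{-n} \leq 1\), whence \(\lambda^n\lvert G(t)\rvert^2 \leq \lambda^n\lvert G(t)\rvert \leq 1\), i.e.\ \(F(t) \geq 0\). So \(F\) is a nonnegative trigonometric polynomial on \(\mathbb{T}\).

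Then I would apply the univariate Fej\'{e}r--Riesz lemma (Lemma~6.1.3 in \cite{[Daubechies]TenLectures}) to get a univariate trigonometric polynomial \(P\) with \(\lvert P(t)\rvert^2 = F(t)\) for all \(t\in\mathbb{T}\), and finally set \(p(\xi) := P(\xi\cdot\omega)\). Writing \(P(t) = \sum_k p_k e^{ikt}\) gives \(p(\xi) = \sum_k p_k e^{i(k\omega)\cdot\xi}\) with \(k\omega \in \mathbb{Z}^n\), so \(p\) is a trigonometric polynomial on \(\mathbb{T}^n\), and \(\lvert p(\xi)\rvert^2 = \lvert P(\xi\cdot\omega)\rvert^2 = F(\xi\cdot\omega) = 1 - \lambda^n\lvert g(\xi)\rvert^2\), as required. (If one additionally wants the normalization \(p(0)=1\) used in Theorem~\ref{theorem: our subQMF construction}, note that \(\lvert P(0)\rvert^2 = F(0) = 1 - \lambda^n\lvert G(0)\rvert^2 = 1\) because \(G(0)=0\), so \(P\) may be multiplied by a unimodular constant to achieve \(P(0)=1\).)

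The argument is essentially routine; the only point needing care is confirming that the slice map \(\xi \mapsto \xi\cdot\omega\) is onto \(\mathbb{R}\) when \(\omega \neq 0\), so that nonnegativity of \(F\) on the \emph{whole} circle---not merely on the image of some proper subset---is in hand before invoking Fej\'{e}r--Riesz. The slight strengthening in the hypothesis (\(1 - \lambda^n\lvert g\rvert \geq 0\) rather than \(1 - \lambda^n\lvert g\rvert^2 \geq 0\)) is exactly what makes the passage \(\lvert G(t)\rvert \leq 1 \Rightarrow \lambda^n\lvert G(t)\rvert^2 \leq 1\) immediate.
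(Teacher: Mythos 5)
Your proof is correct and follows essentially the same route as the paper's: reduce to the univariate Fej\'er--Riesz lemma via the substitution \(\mu = \xi\cdot\omega\) and then set \(p(\xi) = P(\xi\cdot\omega)\). You are simply more careful than the paper about the edge cases (the \(\omega = 0\) case, surjectivity of \(\xi \mapsto \xi\cdot\omega\), and deducing \(1-\lambda^n\lvert g\rvert^2 \ge 0\) from the stated hypothesis \(1-\lambda^n\lvert g\rvert \ge 0\)), which the paper passes over silently.
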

	

	We present further examples below. The following example illustrates the construction of a TWF using Theorem~\ref{theorem: our subQMF construction} for the case of \(\lambda = 3\).
	
	\begin{example}[Example with \(\lambda = 3\)]
		Let \(\lambda=3, n=1\) and \(N=2\). We consider
		\begin{equation*}
			\tau(\xi) = (3+4\cos \xi + 2 \cos 2\xi)/9,
		\end{equation*}
		which is used in Example 2 of \cite{[HurZach]Interpolatory}.
		From this \(\tau\), we can figure out \(p_l\) satisfying \(\tau(\xi) = (1/3) \sum_{l=1}^3 p_l(3 \xi) e^{i\nu_l \xi}\) for \(\nu_1=1\), \(\nu_2=2\), and \(\nu_3=0\) as follows:
		\begin{equation*}
			p_1(\xi) = (2 + e^{-i\xi})/3, \quad p_2(\xi) = (1 + 2e^{-i\xi})/3, \quad p_3(\xi)\equiv 1.
		\end{equation*}
		We see that \(\lvert p_1(\xi)\rvert^2 = \lvert p_2(\xi)\rvert^2=(1+8\cos^2(\xi/2))/9\). Then we need to find \(g_1\) and~\(g_2\) such that \(\lvert g_1 (\xi)\rvert^2 = \lvert g_2 (\xi)\rvert^2 = (8/27) \sin^2(\xi/2)\) to satisfy the equation~(\ref{eqn : our condition on p,g for subQMF}) together with given \(p_1\) and \(p_2\). Thus, we have 
		\begin{equation*}
			g_1(\xi) = g_2(\xi) = (\sqrt{6}/9) (1-e^{i\xi}).
		\end{equation*}
		We note that both the accuracy and flatness numbers of \(\tau\) are \(2\). Therefore, the wavelet masks \(q_{2,m}, 1 \leq m \leq 3,\) have at least two vanishing moments. Also, for each \(1 \leq l \leq 2\), the wavelet mask \(q_{1,l}\) has one vanishing moment since \(g_l\) has one vanishing moment. Thus we get a new univariate, compactly supported TWF for \(\lambda=3\), consisting of \(5\) mother wavelets with at least \(1\) vanishing moment.
		
		With the same \(\lambda, n\) and \(N\), we now consider 
		\begin{equation*}
			\tau(\xi) = (81 + 120 \cos \xi + 60 \cos 2\xi - 10 \cos 4\xi -8 \cos 5 \xi)/3^5,
		\end{equation*}
		which is appeared in Example 3 in \cite{[HurZach]Interpolatory}. We find \(p_l, 1 \leq l \leq 3,\) as  
		\begin{equation*}
			p_1(\xi) = \frac{-4 e^{-2i\xi} +30e^{-i\xi} +60 -5 e^{i\xi}}{3^4}, \,\, p_2(\xi) = \frac{-5 e^{-2i\xi} +60e^{-i\xi} +30 -4e^{i\xi}}{3^4},
		\end{equation*} 
		and \(p_3 \equiv 1\).
		For \(l=1\) and~\(2\), we see that 
		\begin{equation*}
			\lvert p_l(\xi) \rvert^2 = (961 + 12480 \cos^2(\xi/2) - 8160 \cos^4 (\xi/2) + 1280 \cos^6 (\xi/2))/81^2.
		\end{equation*}
		Then for \(l=1\) and \(2\),  the function \(3^9 \lvert g_l (\xi) \rvert^2\) equals to
		\begin{eqnarray*}
			 && 12480 \sin^2(\xi/2) -8160 \left(2 \cos^2 (\xi/2) \sin^2 (\xi/2) +\sin^4 (\xi/2)\right) \\
			& + &1280 \left(3 \cos^4(\xi/2) \sin^2 (\xi/2) + 3 \cos^2(\xi/2) \sin^4(\xi/2) + \sin^6(\xi/2)\right)\\
			& =   &\sin^4 (\xi/2) \left( 5600 - 1280 \cos^2 (\xi/2)\right).
		\end{eqnarray*}
		Thus for \(l=1\) and \(2\), we set 
		\[g_l(\xi)=\frac{\sqrt{30}}{476} \left( - e^{-i\xi} + 2 - e^{i\xi}\right) \left( 3 \sqrt{3} + \sqrt{35} + (3\sqrt{3} - \sqrt{35}) e^{i\xi} \right).\]
		As demonstrated in Example 3 of \cite{[HurZach]Interpolatory}, both the accuracy and flatness numbers of \(\tau\) are \(4\). Thus, wavelet masks \(q_{2,m}, 1 \leq m \leq 3,\) have at least \(4\) vanishing moments. Also, for each \(1 \leq l \leq 2\), the wavelet mask \(q_{1,l}\) has two vanishing moments since \(g_l\) has two vanishing moments in the above argument. Thus, we get a new univariate, compactly supported TWF for \(\lambda=3\), consisting of \(5\) mother wavelets with at least \(2\) vanishing moments. \qed
	\end{example}
	
	The next example is a bivariate (i.e., \(n = 2\)) TWF constructed by applying Theorem~\ref{theorem: our subQMF construction}.
	
    \begin{example}[Example involving two auxiliary functions \(g_l^{(1)}\), \(g_l^{(2)}\)]
		Let \(\lambda= 2, n=2\) and \(N=4\). Then for each \(1 \leq l \leq 4\), we will find \(p_l\) and \(g_l^{(1)}, g_l^{(2)}\) satisfying (\ref{eqn : our condition on p,g for subQMF}) by simply employing 
        \begin{equation*}
            1- \cos^4 t = 4 \left( (\sin^2 t \cdot \cos^2 t)/2 + (\sin^4 t)/4\right)
        \end{equation*}
        (for example, \(\lvert p_l (\xi_1, \xi_2)\rvert^2 = \cos^4(\xi_1)\), \(\lvert g_l^{(1)} (\xi_1, \xi_2)\rvert^2 = (\sin(\xi_1)\cdot \cos(\xi_1)/\sqrt{2})^2\), and \(\lvert g_l^{(2)} (\xi_1, \xi_2)\rvert^2 = (\sin^2 (\xi_1)/2)^2\)).
		We choose, for \(l=1 \text{ and }2\),
		\begin{equation*}
			p_l(\xi_1, \xi_2) = \frac{e^{2i\xi_1} + 2 + e^{-2i\xi_1}}{4} \quad \text{and} \quad \begin{cases}
			    g_l^{(1)}(\xi_1, \xi_2) = 
                (e^{2i\xi_1} - e^{-2i\xi_1})/4\sqrt{2},\\
                g_l^{(2)}(\xi_1, \xi_2) = 
                (e^{2i\xi_1}-2+e^{-2i\xi_1})/8,
			\end{cases}
		\end{equation*}
		\begin{equation*}
			p_l(\xi_1, \xi_2) = \frac{e^{2i\xi_2} + 2 + e^{-2i\xi_2}}{4} \quad \text{and} \quad \begin{cases}
			    g_l^{(1)}(\xi_1, \xi_2) = 
                (e^{2i\xi_2} - e^{-2i\xi_2})/4\sqrt{2},\\
                g_l^{(2)}(\xi_1, \xi_2) = (e^{2i\xi_2}-2+e^{-2i\xi_2})/8,
			\end{cases}
		\end{equation*}
        for \(l = 3\) and \(4\). Also, by choosing \(\nu_1 = (0,0), \nu_2 = (1,0), \nu_3 = (0,1)\) and \(\nu_4 = (1,1)\), we have
		\begin{eqnarray*}
			\tau(\xi) & = & \frac{1}{16} \left( 
            e^{4i\xi_1} + 2 + e^{-4i\xi_1} + e^{5i\xi_1} + 2 e^{i\xi_1} + e^{-3i\xi_1}\right.\\
            & & \left.\qquad + e^{5i\xi_2} + 2 e^{i\xi_2} + e^{-3i\xi_2} + e^{i(\xi_1 + 5\xi_2)} + 2 e^{i(\xi_1 + \xi_2)} + e^{i(\xi_1 -3\xi_2)}\right).
		\end{eqnarray*}
		The accuracy and the flatness of \(\tau\) are both equal to \(1\). Thus, the wavelet masks \(q_{2,m}, 1 \leq m \leq 4,\) have at least one vanishing moment. For each \(1 \leq l \leq 4\), the wavelet mask \(q_{1,l}^{(1)}\) and \(q_{1,l}^{(2)}\) have one and two vanishing moments respectively, since \(g_l^{(1)}\) and \(g_l^{(2)}\) has one and two vanishing moments respectively. Thus, we get a new bivariate, compactly supported TWF for \(\lambda=2\), consisting of \(8\) mother wavelets with at least \(1\) vanishing moment. \qed

	\end{example}
    
	
	\subsection{New tight wavelet frames with rational trigonometric polynomial refinement masks}
	\label{subsection : our TWF using oblique subQMF}
	
	In this subsection, we present our new method that corresponds to the construction assuming the oblique sub-QMF condition. For simplicity, we focus on the case where each \(p_l\) is associated with a single \(g_l\) for all \(1 \leq l \leq N\), though the method naturally extends to the general case with multiple \(g_l\)'s per \(p_l\), as described in Theorem~\ref{theorem: our subQMF construction}. The following theorem is a generalization of our previous construction outlined in Theorem~\ref{theorem: our subQMF construction}. More specifically, it precisely corresponds to our earlier construction if \(S\) is set to~\(1\). The complete statement is provided below.
	%
	
	\begin{theorem}
		Let \(N \leq \lambda^n\). Let \(S\) be a rational trigonometric polynomial, which is nonnegative on \(\mathbb{T}^n\). Assume that \(S\) is continuous at~\(0\) with \(S(0) = 1\), and \(S, 1/S \in L^\infty (\mathbb{T}^n)\). For each \(1 \leq l \leq N\), suppose that rational trigonometric polynomials \(p_l\) and \(g_l\) satisfy
		\begin{equation}
			\frac{1}{S(\xi)} - |p_l(\xi)|^2 = \lambda^n |g_l(\xi)|^2
			\label{eqn : our condition on p,g for oblique subQMF}
		\end{equation} 
		for all \(\xi\in\mathbb{T}^n\) at which both sides are defined, and \(p_l(0)=1\).
		Let \(s\) be a rational trigonometric polynomial satifying \(\lvert s \rvert^2=1/S\) with \(s(0) = 1\). 
		In case of \(N<\lambda^n\), let \(p_l\) be a rational trigonometric polynomial satisfying \(\lvert p_l \rvert^2 = 1/S\) for \(l = N+1, \cdots,\lambda^n\).
		We define a rational trigonometric polynomial \(\tau\) as
		\begin{equation*}
			\tau(\xi) = \left. \left(\lambda^{-n}\sum_{l=1}^{\lambda^n} p_l(\lambda \xi) e^{i \nu_l \cdot \xi}\right)\middle/\left({\overline{s(\xi)}}\right)\right.
		\end{equation*}
		where \(\nu_1, \cdots, \nu_{\lambda^n}\) are distinct elements of \(\Lambda\),
		and \(N+\lambda^n\) functions as follows:
		\begin{eqnarray}
			q_{1,l}(\xi) & = & S(\lambda \xi) \tau(\xi) \overline{g_l(\lambda \xi)}, \quad 1 \leq l \leq N, \label{eq:q1} \\
			q_{2,m}(\xi) & = & \lambda^{-\frac{n}{2}} \left(S(\xi) s(\xi) e^{i\nu_m \cdot \xi} - S(\lambda \xi) \tau(\xi) \overline{p_m(\lambda \xi)}\right), 1 \leq m \leq \lambda^n.\label{eq:q2}
		\end{eqnarray}
		Then the wavelet system \(X(\tau, (q_{1,l}:l=1, \cdots, N), (q_{2,m} : m=1, \cdots, \lambda^n))\) is a TWF for~\(L^2(\mathbb{R}^n)\).
		\label{theorem : our oblique subQMF construction}
	\end{theorem}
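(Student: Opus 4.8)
The plan is to follow the blueprint of the proof of Theorem~\ref{theorem: our subQMF construction}, with the sub-QMF identity replaced by its oblique counterpart, and then to invoke the oblique extension principle (Result~\ref{result : OEP}) in place of Result~\ref{result : LaiStockler subQMF}. Write \(s\) for the fixed rational trigonometric polynomial with \(|s|^2=1/S\) and \(s(0)=1\), and set \(P(\xi):=\lambda^{-n}\sum_{l=1}^{\lambda^n}p_l(\lambda\xi)e^{i\nu_l\cdot\xi}\), so that \(\tau=P/\overline{s}\). For the indices \(l=N+1,\dots,\lambda^n\) one may simply take \(p_l=s\); this is consistent with \(|p_l|^2=1/S\) and gives \(p_l(0)=1\), whence \(\tau(0)=P(0)/\overline{s(0)}=\lambda^{-n}\sum_l p_l(0)=1\). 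Summing the hypothesis~(\ref{eqn : our condition on p,g for oblique subQMF}) over \(l=1,\dots,N\) and adding the \(\lambda^n-N\) identities \(|p_l|^2=1/S\), then dividing by \(\lambda^n\) and evaluating at \(\lambda\xi\), yields the key relation \(\lambda^{-n}\sum_{l=1}^{\lambda^n}|p_l(\lambda\xi)|^2=\frac{1}{S(\lambda\xi)}-\sum_{l=1}^N|g_l(\lambda\xi)|^2\), valid wherever all terms are defined. Combined with \(|\tau(\xi)|^2/S(\xi)=|P(\xi)|^2\) (since \(1/S=|s|^2\)) and the identity \(\sum_{\gamma\in\Gamma}|P(\xi+\gamma)|^2=\lambda^{-n}\sum_l|p_l(\lambda\xi)|^2\) established exactly as in the proof of Theorem~\ref{theorem: our subQMF construction}, this already shows that \(\tau,S,g_l\) realize the oblique sub-QMF SOS identity~(\ref{eqn : oblique sub-QMF SOSv2}).

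The heart of the argument is the direct verification of the OEP condition~(\ref{eqn : OEP_condition}) for the combined MRA mask \((\tau,(q_{1,l}),(q_{2,m}))\). I would substitute \(\tau=P/\overline{s}\) and \(S=1/|s|^2\) throughout and exploit three elementary facts: (i) \(S(\xi)s(\xi)=1/\overline{s(\xi)}\) — so the first summand of \(q_{2,m}\) is \(\lambda^{-n/2}e^{i\nu_m\cdot\xi}/\overline{s(\xi)}\) — and \(\overline{\tau(\xi)}\,s(\xi)=\overline{P(\xi)}\); (ii) for \(\gamma\in\Gamma\) one has \(\lambda\gamma\in 2\pi\mathbb{Z}^n\), so any quantity evaluated at \(\lambda(\xi+\gamma)\), in particular \(S(\lambda\cdot),g_l(\lambda\cdot),p_m(\lambda\cdot)\), is unchanged under \(\xi\mapsto\xi+\gamma\), while \(\sum_{l=1}^{\lambda^n}e^{i\nu_l\cdot\xi}p_l(\lambda\xi)=\lambda^nP(\xi)\); (iii) the character-sum orthogonality \(\sum_{\gamma\in\Gamma}e^{i\mu\cdot\gamma}=\lambda^n\) for \(\mu=0\) and \(0\) for \(\mu\in\{\nu-\tilde\nu:\nu,\tilde\nu\in\Lambda\}\setminus\{0\}\), together with its companion \(\sum_{l=1}^{\lambda^n}e^{-i\nu_l\cdot\gamma}=\lambda^n\delta_{\gamma,0}\) for \(\gamma\in\Gamma\). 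With these, the \(\gamma=0\) sum \(S(\lambda\xi)|\tau(\xi)|^2+\sum_l|q_{1,l}(\xi)|^2+\sum_m|q_{2,m}(\xi)|^2\): the cross terms in \(\sum_m|q_{2,m}|^2\) collapse to multiples of \(S(\lambda\xi)S(\xi)|P(\xi)|^2\), the parts involving \(\sum_l|g_l(\lambda\cdot)|^2\) cancel after inserting the key relation, and the total simplifies to \(S(\xi)\); for \(\gamma\in\Gamma\setminus\{0\}\) the diagonal term \(\sum_m 1/|s(\xi)|^2\) is killed by \(\sum_l e^{-i\nu_l\cdot\gamma}=0\), and the surviving terms combine with \(S(\lambda\xi)\tau(\xi)\overline{\tau(\xi+\gamma)}\) and \(\sum_l q_{1,l}(\xi)\overline{q_{1,l}(\xi+\gamma)}\) to give \(0\). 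All these identities hold for every \(\xi\) outside the zero set of \(s(\xi)s(\lambda\xi)\) and the poles of \(S\), which is a null set, hence in particular for a.e.\ \(\xi\in\sigma(\phi)\).

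It then remains to supply the analytic hypotheses of Result~\ref{result : OEP}. Condition~(a) — \(\tau,q_{1,l},q_{2,m}\in L^\infty(\mathbb{T}^n)\) — follows from \(S,1/S\in L^\infty\) and the boundedness of the trigonometric polynomials built from the \(p_l\) and \(g_l\); condition~(i) on \(S\) is part of the hypotheses. Conditions~(b) and~(c) — \(\lim_{\xi\to0}\widehat{\phi}_\tau(\xi)=1\) and \([\widehat{\phi}_\tau,\widehat{\phi}_\tau]\in L^\infty(\mathbb{T}^n)\) — are precisely what Proposition~\ref{prop : refinable ftn from rationalTP} and the remark following it (cf.\ Remark~\ref{remark : refinable function with rTP}) yield for a rational trigonometric polynomial refinement mask \(\tau\) with \(\tau(0)=1\) satisfying the OEP condition~(\ref{eqn : OEP_condition}), which has just been verified. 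Feeding all this into Result~\ref{result : OEP} gives that \(X(\tau,(q_{1,l}),(q_{2,m}))\) is a TWF for \(L^2(\mathbb{R}^n)\). Equivalently, with \(K=1\) and \(s_1=s\) the constructed masks coincide with those of Result~\ref{result : HLO oblique subQMF} (whose proof uses only \(L^\infty\) bounds on \(\tau\) and hence applies to rational \(\tau\)); the one nontrivial point in this identification, \(\sum_{\gamma\in\Gamma}\overline{\tau(\xi+\gamma)}\,s(\xi+\gamma)\,e^{i\nu_m\cdot(\xi+\gamma)}=\overline{p_m(\lambda\xi)}\), is again immediate from (i)–(iii).

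The algebra in the second paragraph, though lengthy, is mechanical once the substitutions \(\tau=P/\overline{s}\) and \(S=1/|s|^2\) are made; the genuinely new difficulty relative to Theorem~\ref{theorem: our subQMF construction} is that \(\tau\) is now a rational (not polynomial) mask \emph{produced} by the construction rather than given in advance, so one must know a priori that \(\widehat{\phi}_\tau(\xi)=\prod_{j\ge1}\tau(\lambda^{-j}\xi)\) defines a bona fide refinable function in \(L^2(\mathbb{R}^n)\) with the correct value at the origin and a bounded bracket. Establishing this is exactly the role of Proposition~\ref{prop : refinable ftn from rationalTP}, and checking its hypotheses for our \(\tau\) is the crux of the proof; a secondary, minor point is the null-set bookkeeping forced by the singularities of the rational masks.
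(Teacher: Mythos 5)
Your proposal is correct and follows essentially the same route as the paper: summing the building-block identities to get the oblique sub-QMF SOS identity \(f_{\tau,S}(\xi)=\sum_{l=1}^N\lvert g_l(\lambda\xi)\rvert^2\) for the constructed \(\tau\), obtaining the explicit OEP masks (your \(K=1\), \(s_1=s\) identification with Result~\ref{result : HLO oblique subQMF}, including \(\sum_{\gamma\in\Gamma}\overline{\tau(\xi+\gamma)}\,s(\xi+\gamma)e^{i\nu_m\cdot(\xi+\gamma)}=\overline{p_m(\lambda\xi)}\), is exactly the step the paper performs via its Lemma~\ref{lemma : HLO Theorem 1}), and then invoking Result~\ref{result : OEP} together with Proposition~\ref{prop : refinable ftn from rationalTP}. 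The only cosmetic differences are that the paper cites Lemma~\ref{lemma : HLO Theorem 1} rather than redoing the OEP algebra directly, that condition (b) of Result~\ref{result : OEP} is supplied by Proposition~\ref{prop : convergence of prod at 0} and condition (c) by the bracket estimate as in the proof of Theorem~2 of \cite{[HLO]MultiTWFhighVM} (not by Proposition~\ref{prop : refinable ftn from rationalTP} itself), and that the \(L^\infty\) bound on the wavelet masks is read off from the \(\gamma=0\) case of the OEP identity, with the boundedness of the rational functions \(p_l\), \(g_l\), \(1/\overline{s}\) deduced from \(S,1/S\in L^\infty(\mathbb{T}^n)\) rather than from polynomiality.
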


	We employ the following lemma, which is obtained by slightly modifying the settings in Theorem 1 of \cite{[HLO]MultiTWFhighVM}, to prove the above theorem. This lemma provides a specific form of \(q_l, 1 \leq l \leq L\), satisfying the OEP condition (\ref{eqn : OEP_condition}), together with \(\tau\) and \(S\), if \(f_{\tau,S}\) in (\ref{eqn : oblique sub-QMF}) has an SOS representation with SOS generators. In the original formulation presented in~\cite{[HLO]MultiTWFhighVM}, \(\tau\) is considered as a trigonometric polynomial. However, one can easily see that the proof is still valid when \(\tau\) is a rational trigonometric polynomial, following the same arguments as those used in \cite{[HLO]MultiTWFhighVM}. Therefore, for our analysis, we consider \(\tau\) to be a rational trigonometric polynomial.
	
	\begin{lemma}
		Let \(S\) be a nonzero rational trigonometric polynomial that is nonnegative on \(\mathbb{T}^n\), and \(\tau\)\ be a rational trigonometric polynomial satisfying \(\tau(0)=1\). Also, let \(s\) be a rational trigonometric polynomial that satisfy \(\lvert s \rvert^2 = 1/S\).
		If \(f_{\tau,S}\) in (\ref{eqn : oblique sub-QMF}) has an SOS representation, i.e., there are rational trigonometric polynomials \(g_{\tilde{l}}, 1 \leq l \leq \tilde{L}\), satisfying		\begin{equation*}			
			f_{\tau,S}(\xi) = \sum_{\tilde{l}=1}^{\tilde{L}} \lvert g_{\tilde{l}}(\lambda \xi) \rvert^2
		\end{equation*}		
		for all \(\xi\in \mathbb{T}^n\) at which both sides are defined,		
		then there exist rational trigonometric polynomials \(\{q_l\}_{l=1}^L\) such that for all \(\gamma \in \Gamma\) and \(\xi \in \mathbb{T}^n\) at which both sides are defined:
		\begin{equation*}			
			S(\lambda \xi) \tau(\xi) \overline{\tau(\xi + \gamma)} + \sum_{l=1}^L q_l(\xi) \overline{q_l(\xi+\gamma)} = \begin{cases}				
				S(\xi), & \text{if }\gamma = 0,\\				
				0, & \text{otherwise.}			
			\end{cases}		
		\end{equation*}		
		Specifically, \(\{q_l\}_{l=1}^L:= \{q_{1,{\tilde{l}}}\}_{\tilde{l}=1}^{\tilde{L}} \bigcup \{q_{2,m}\}_{m=1}^{\lambda^n}\) where 		\begin{align*}			
			q_{1,{\tilde{l}}}(\xi) & = S(\lambda \xi) \tau(\xi) \overline{g_{\tilde{l}}(\lambda \xi)}, \quad 1 \leq \tilde{l} \leq {\tilde{L}}, \\			
			q_{2,m}(\xi) & = S(\xi) a_m(\xi) - S(\lambda \xi) \tau(\xi) \sum_{\gamma \in \Gamma} \overline{\tau(\xi + \gamma)} a_m(\xi + \gamma), \quad 1 \leq m \leq \lambda^n.		
		\end{align*}
		Here, \(a_m(\xi) := \lambda^{-n/2} e^{i \nu_m \cdot \xi} s(\xi)\) where \(\nu_1, \cdots, \nu_{\lambda^n}\) are distinct elements in \(\Lambda\).
		\label{lemma : HLO Theorem 1}
	\end{lemma}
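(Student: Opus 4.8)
The plan is to verify the displayed identity by substituting the stated forms of $q_{1,\tilde{l}}$ and $q_{2,m}$ and simplifying; since every step is purely algebraic, the computation of \cite{[HLO]MultiTWFhighVM} goes through verbatim when $\tau$ is a rational trigonometric polynomial, provided one works at a point $\xi\in\mathbb{T}^n$ of the set of full measure on which $\tau$, $S$, $1/S$, $s$, the $g_{\tilde{l}}$ and $f_{\tau,S}$ are all defined; the hypotheses $S,1/S\in L^\infty(\mathbb{T}^n)$ then guarantee that the resulting $q$'s are genuine rational trigonometric polynomials. Fix $\gamma\in\Gamma$. I would first isolate the two elementary ingredients: (1) $\lambda\gamma\in 2\pi\mathbb{Z}^n$ for every $\gamma\in\Gamma$, so by $2\pi$-periodicity $S(\lambda(\xi+\gamma))=S(\lambda\xi)$ and $g_{\tilde{l}}(\lambda(\xi+\gamma))=g_{\tilde{l}}(\lambda\xi)$; and (2) for $\gamma',\gamma''\in\Gamma$ one has $\sum_{\nu\in\Lambda}e^{i\nu\cdot(\gamma'-\gamma'')}=\lambda^n$ if $\gamma'=\gamma''$ and $0$ otherwise. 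Combining (2) with $\lvert s\rvert^2=1/S$ and the fact that $S$ is real-valued yields the orthonormality-type relation
\begin{equation*}
\sum_{m=1}^{\lambda^n} a_m(\xi+\gamma')\,\overline{a_m(\xi+\gamma'')}=\frac{\delta_{\gamma',\gamma''}}{S(\xi+\gamma')},\qquad \gamma',\gamma''\in\Gamma ,
\end{equation*}
which is the crux of the argument.

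Next I would introduce the $\Gamma$-periodic functions $b_m(\xi):=\sum_{\gamma'\in\Gamma}\overline{\tau(\xi+\gamma')}\,a_m(\xi+\gamma')$, so that $q_{2,m}=S(\cdot)\,a_m-S(\lambda\cdot)\,\tau\,b_m$, and use the relation above to evaluate the bilinear sums that arise when $\sum_m q_{2,m}(\xi)\overline{q_{2,m}(\xi+\gamma)}$ is expanded:
\begin{align*}
\sum_m a_m(\xi)\overline{a_m(\xi+\gamma)}&=\frac{\delta_{\gamma,0}}{S(\xi)}, & \sum_m a_m(\xi)\overline{b_m(\xi)}&=\frac{\tau(\xi)}{S(\xi)},\\
\sum_m b_m(\xi)\overline{a_m(\xi+\gamma)}&=\frac{\overline{\tau(\xi+\gamma)}}{S(\xi+\gamma)}, & \sum_m \lvert b_m(\xi)\rvert^2&=\sum_{\gamma'\in\Gamma}\frac{\lvert\tau(\xi+\gamma')\rvert^2}{S(\xi+\gamma')} .
\end{align*}
Using in addition $S(\lambda(\xi+\gamma))=S(\lambda\xi)$, the $\Gamma$-periodicity $b_m(\xi+\gamma)=b_m(\xi)$, and that $S$ is real, the four-term expansion of $\sum_m q_{2,m}(\xi)\overline{q_{2,m}(\xi+\gamma)}$ collapses to
\begin{equation*}
S(\xi)\,\delta_{\gamma,0}-2\,S(\lambda\xi)\tau(\xi)\overline{\tau(\xi+\gamma)}+S(\lambda\xi)^2\tau(\xi)\overline{\tau(\xi+\gamma)}\sum_{\gamma'\in\Gamma}\frac{\lvert\tau(\xi+\gamma')\rvert^2}{S(\xi+\gamma')} .
\end{equation*}

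Finally, ingredient (1) together with the SOS hypothesis $f_{\tau,S}(\xi)=\sum_{\tilde{l}}\lvert g_{\tilde{l}}(\lambda\xi)\rvert^2$ gives $\sum_{\tilde{l}} q_{1,\tilde{l}}(\xi)\overline{q_{1,\tilde{l}}(\xi+\gamma)}=S(\lambda\xi)^2\tau(\xi)\overline{\tau(\xi+\gamma)}\,f_{\tau,S}(\xi)$, and replacing $f_{\tau,S}(\xi)$ by its definition $1/S(\lambda\xi)-\sum_{\gamma'\in\Gamma}\lvert\tau(\xi+\gamma')\rvert^2/S(\xi+\gamma')$ rewrites this as $S(\lambda\xi)\tau(\xi)\overline{\tau(\xi+\gamma)}$ minus the last term of the previous display. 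Adding the leading summand $S(\lambda\xi)\tau(\xi)\overline{\tau(\xi+\gamma)}$ to this $q_1$-contribution and to the $q_2$-contribution above, every term carrying $S(\lambda\xi)\tau(\xi)\overline{\tau(\xi+\gamma)}$ or $S(\lambda\xi)^2\tau(\xi)\overline{\tau(\xi+\gamma)}$ cancels, leaving exactly $S(\xi)\,\delta_{\gamma,0}$, which is the asserted identity. I do not anticipate a genuine obstacle: the work is entirely bookkeeping --- keeping track of which factor is conjugated, invoking the $\Gamma$-periodicity of $b_m$ and the $2\pi$-periodicity of $\tau,S,s,g_{\tilde{l}}$ at the right moments, and observing that nothing in the computation ever uses that $\tau$ is a polynomial, so the rational case requires no new idea.
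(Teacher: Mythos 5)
Your computation is correct and coincides with the argument the paper relies on: the paper offers no proof of this lemma beyond citing Theorem~1 of \cite{[HLO]MultiTWFhighVM} (here with $K=1$ and $s_1=s$) and observing that the argument carries over verbatim to rational $\tau$, and your expansion --- the orthogonality relation $\sum_{m} a_m(\xi+\gamma')\overline{a_m(\xi+\gamma'')}=\delta_{\gamma',\gamma''}/S(\xi+\gamma')$, the $\Gamma$-periodicity of $b_m$, the identity $S(\lambda(\xi+\gamma))=S(\lambda\xi)$, and the final cancellation against the SOS hypothesis --- is precisely that argument written out, and it checks out term by term. One small inaccuracy: $S,1/S\in L^\infty(\mathbb{T}^n)$ is not a hypothesis of this lemma and is not what makes the $q_l$ rational trigonometric polynomials; that is automatic because rational trigonometric polynomials are closed under sums, products, and complex conjugation (the boundedness assumptions only enter later, in Theorem~\ref{theorem : our oblique subQMF construction}, to verify the conditions of Result~\ref{result : OEP}).
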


	Now, we present our proof of Theorem~\ref{theorem : our oblique subQMF construction}. In this proof, we utilize a few propositions presented in Section~\ref{section : refinable ftn with rationalTP}, employing \(\phi_\tau\) as our refinable function (see Proposition~\ref{prop : refinable ftn from rationalTP}) and making use of its convergence property at \(0\) (see Proposition~\ref{prop : convergence of prod at 0}).
	
	\begin{proof}[Proof of Theorem~\ref{theorem : our oblique subQMF construction}]
		Without loss of generality, we assume that \(N<\lambda^n\). Since the equation (\ref{eqn : our condition on p,g for oblique subQMF}) holds for every \(1 \leq l \leq N\), and since we set \(p_l\) to be satisfied \(\lvert p_l \rvert^2 = 1/S\) for \(N+1 \leq l \leq \lambda^n\) , we see that 
		\begin{equation*}
			\sum_{l=1}^N |g_l(\xi)|^2 = \lambda^{-n} \sum_{l=1}^{\lambda^n} \left( \frac{1}{S(\xi)} - |p_l(\xi)|^2 \right) = \frac{1}{S(\xi)} - \lambda^{-n} \sum_{l=1}^{\lambda^n} |p_l(\xi)|^2.
		\end{equation*}
		From the definition of \(\tau\), we first see that
		\begin{eqnarray*}
			\frac{|\tau(\xi)|^2}{S(\xi)} & = & \lambda^{-2n} \left(\sum_{l=1}^{\lambda^n} p_l(\lambda \xi)e^{i\nu_l \cdot \xi} \right) \left(\sum_{\tilde{l} = 1}^{\lambda^n} \overline{p_{\tilde{l}}(\lambda \xi)} e^{-i \nu_{\tilde{l}} \cdot \xi} \right) \\
			& = &\lambda^{-2n} \sum_{l=1}^{\lambda^n} \sum_{\tilde{l} = 1}^{\lambda^n} p_l(\lambda \xi) \overline{p_{\tilde{l}}(\lambda \xi)} e^{i \xi \cdot (\nu_l - \nu_{\tilde{l}})}.
		\end{eqnarray*}
		Then we have
		\begin{eqnarray*}
			\sum_{\gamma \in \Gamma} \frac{|\tau(\xi+\gamma)|^2}{S(\xi+\gamma)} & = & \lambda^{-2n} \sum_{l=1}^{\lambda^n} \sum_{\tilde{l} = 1}^{\lambda^n} p_l(\lambda \xi) \overline{p_{\tilde{l}}(\lambda \xi)} e^{i \xi \cdot (\nu_l - \nu_{\tilde{l}})} \left( \sum_{\gamma \in \Gamma} e^{i \gamma \cdot (\nu_l - \nu_{\tilde{l}})}\right) \\
			& = & \lambda^{-n} \sum_{l=1}^{\lambda^n} |p_l(\lambda \xi)|^2.
		\end{eqnarray*}
		Thus, we have \(1/S(\lambda\xi) - \sum_{\gamma \in \Gamma} |\tau(\xi+\gamma)|^2/S(\xi+\gamma) = \sum_{l=1}^N |g_l(\lambda\xi)|^2\), which shows that \(f_{\tau,S}\) in (\ref{eqn : oblique sub-QMF}) has an SOS representation with SOS generators \(g_l, 1 \leq l \leq~N\). Then by applying Lemma~\ref{lemma : HLO Theorem 1}, there exist rational trigonometric polynomials \(q_k, 1 \leq k \leq N+\lambda^n\), satisfying 
		\begin{equation}
			S(\lambda \xi) \tau(\xi) \overline{\tau(\xi + \gamma)} + \sum_{k=1}^{N+\lambda^n} q_k(\xi) \overline{q_k(\xi+\gamma)} = \begin{cases}
				S(\xi), & \text{if }\gamma = 0,\\
				0, & \gamma \in \Gamma \setminus \{0\}.
			\end{cases}
			\label{eqn : our OEP condition}
		\end{equation}
		Here, we observe that the functions \(\{q_{1,l}\}_{l=1}^N\) in (\ref{eq:q1}) and \(\{q_{2,m}\}_{m=1}^{\lambda^n}\) in (\ref{eq:q2}) constitute \(\{q_k\}_{k=1}^{N+\lambda^n}\).
		
		We now utilize the OEP construction method presented in Result~\ref{result : OEP} to complete our proof. Note that we can employ \(\phi_\tau\in L^2(\mathbb{R}^n)\) as our refinable function employing Proposition~\ref{prop : refinable ftn from rationalTP} under our settings. We check each of the conditions in Result~\ref{result : OEP}. Clearly, conditions (i) and~(ii) there hold based on the given assumptions. We see that the condition~(b) holds straightforwardly from Proposition~\ref{prop : convergence of prod at 0}. Furthermore, one can easily see that the condition~(c) holds using the same method as done in the proof of Theorem~2 of~\cite{[HLO]MultiTWFhighVM}. 
  
  For the condition (a) of Result~\ref{result : OEP}, we first observe that for each \(1 \leq l \leq N\), \(p_l\) is essentially bounded from (\ref{eqn : our condition on p,g for oblique subQMF}), since \(1/S\in L^\infty(\mathbb{T}^n)\). Combining this with the fact that \(1/\overline{s}\) is essentially bounded, which is from \(S\in L^\infty(\mathbb{T}^n)\), we conclude that \(\tau\in L^\infty(\mathbb{T}^n)\).
		Examining~(\ref{eqn : our OEP condition}) with \(\gamma=0\), we observe that \(\sum_{l=1}^{N+\lambda^n} \lvert q_l(\xi) \rvert^2 = S(\xi) - S(\lambda \xi) \lvert \tau(\xi) \rvert^2\). From this and from the facts that \(S, \tau \in L^\infty(\mathbb{T}^n)\), we conclude that \(q_l \in L^\infty(\mathbb{T}^n)\) for every \(1\leq l \leq N+\lambda^n\), establishing the condition (a).
	\end{proof}

	In terms of the settings, Theorem~\ref{theorem : our oblique subQMF construction} differs from Result~\ref{result : HLO oblique subQMF} in two key aspects. Firstly, the refinement mask \(\tau\) is constructed as a rational trigonometric polynomial. Secondly, the variable \(K\), representing the number of SOS generators of \(1/S\), in Result~\ref{result : HLO oblique subQMF}, is chosen as \(1\) here.
	
	In the approach of Theorem~\ref{theorem : our oblique subQMF construction}, as in Theorem~\ref{theorem: our subQMF construction}, an important aspect is that we avoid the need to solve the SOS problem of finding \(g_l, 1 \leq l \leq N\), which satisfies 
	\begin{equation*}
		\frac{1}{S(\lambda \xi)} - \sum_{\gamma \in \Gamma} \frac{|\tau(\xi + \gamma)|^2}{S(\xi + \gamma)} = \sum_{l=1}^N |g_l(\lambda\xi)|^2
	\end{equation*}
	for a given refinement mask \(\tau\), under the given conditions. These conditions we impose distribute the responsibility between \(p_l\) and \(g_l\), preventing the entire burden from falling solely on finding \(g_l\).
	
	We now show a result about the vanishing moments of wavelet masks constructed in Theorem~\ref{theorem : our oblique subQMF construction}. 
	\begin{corollary}
		Assume the settings of Theorem~\ref{theorem : our oblique subQMF construction}. Suppose that \(\tau\) has accuracy number \(a\geq 1\), and for each \(1 \leq l \leq N\), \(g_l\) has vanishing moments~\(d_l\). 
		Then, for each \(1 \leq l \leq N\), the wavelet mask \(q_{1,l}\) has exactly \(d_l\) vanishing moments, and the wavelet masks \(q_{2,m}, 1 \leq m \leq \lambda^n,\) have at least \(\min\{a, d_1, \cdots, d_N\}\) vanishing moments.
		\label{corollary : vanishing result of our oblique subQMF}
	\end{corollary}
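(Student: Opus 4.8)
The plan is to reduce the statement to Result~\ref{result : vanishing result of oblique subQMF}, which already records the vanishing-moment count for the oblique SOS construction; the only input we still need is the number of vanishing moments of \(f_{\tau,S}\), expressed through the \(d_l\)'s. Recall from the proof of Theorem~\ref{theorem : our oblique subQMF construction} that, under the present hypotheses, \(f_{\tau,S}(\xi)=\sum_{l=1}^{N}\lvert g_l(\lambda\xi)\rvert^2\). Since \(\xi\mapsto\lambda\xi\) and complex conjugation preserve the order of a zero at the origin, each summand \(\lvert g_l(\lambda\xi)\rvert^2\) vanishes there to order exactly \(2d_l\), with leading term a nonnegative, not-identically-zero homogeneous polynomial of degree \(2d_l\). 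Among the summands of minimal order these leading terms add up to something again nonnegative and not identically zero, so they cannot cancel; hence \(f_{\tau,S}\) has exactly \(c:=2\min\{d_1,\dots,d_N\}\) vanishing moments. (Note \(c\ge 2\), because \(p_l(0)=1\) and \eqref{eqn : our condition on p,g for oblique subQMF} evaluated at \(\xi=0\) force \(g_l(0)=0\), i.e.\ \(d_l\ge1\).)

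With \(c\) in hand the conclusion follows. For \(q_{1,l}(\xi)=S(\lambda\xi)\tau(\xi)\overline{g_l(\lambda\xi)}\): the masks \(S\) and \(\tau\) are bounded (the latter was shown in the proof of Theorem~\ref{theorem : our oblique subQMF construction}), hence smooth near the origin, and take the value \(1\) there, so \(q_{1,l}\) vanishes to exactly the order of \(\overline{g_l(\lambda\cdot)}\), namely \(d_l\). For the \(q_{2,m}\), inserting \(c=2\min\{d_1,\dots,d_N\}\) into the lower bound \(\lfloor\min\{a,c/2\}\rfloor\) of Result~\ref{result : vanishing result of oblique subQMF} gives \(\lfloor\min\{a,\min_l d_l\}\rfloor=\min\{a,d_1,\dots,d_N\}\).

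The point that needs care --- the main obstacle --- is that Result~\ref{result : vanishing result of oblique subQMF} is phrased for a trigonometric-polynomial refinement mask, whereas here \(\tau\) is a rational trigonometric polynomial. I would settle this by re-deriving the \(q_{2,m}\) bound directly, which is short. Writing \(q_{2,m}\) in the form provided by Lemma~\ref{lemma : HLO Theorem 1} and splitting off the \(\gamma=0\) term gives
\begin{equation*}
q_{2,m}(\xi)=a_m(\xi)\,S(\xi)\,S(\lambda\xi)\Bigl(\tfrac{1}{S(\lambda\xi)}-\tfrac{\lvert\tau(\xi)\rvert^2}{S(\xi)}\Bigr)-S(\lambda\xi)\,\tau(\xi)\sum_{\gamma\in\Gamma\setminus\{0\}}\overline{\tau(\xi+\gamma)}\,a_m(\xi+\gamma),
\end{equation*}
and, since \(\tfrac{1}{S(\lambda\xi)}-\tfrac{\lvert\tau(\xi)\rvert^2}{S(\xi)}=f_{\tau,S}(\xi)+\sum_{\gamma\in\Gamma\setminus\{0\}}\lvert\tau(\xi+\gamma)\rvert^2/S(\xi+\gamma)\), the first term is \(O(\lvert\xi\rvert^{\min\{c,2a\}})\) and the second is \(O(\lvert\xi\rvert^{a})\), because \(\tau\) vanishes to order at least \(a\) at every point of \(\Gamma\setminus\{0\}\) while \(S,1/S\) and \(s\) are smooth near the origin. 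Hence \(q_{2,m}\) vanishes to order at least \(\min\{a,c\}\ge\min\{a,d_1,\dots,d_N\}\), completing the argument independently of the trigonometric-polynomial case.
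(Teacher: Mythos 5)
Your proposal is correct, and its skeleton is the same as the paper's: both arguments read off the vanishing moments of \(f_{\tau,S}\) from the identity \(f_{\tau,S}(\xi)=\sum_{l=1}^N\lvert g_l(\lambda\xi)\rvert^2\) established in the proof of Theorem~\ref{theorem : our oblique subQMF construction} and then invoke the vanishing-moment count of Result~\ref{result : vanishing result of oblique subQMF}. The genuine difference is how the rational-mask issue is handled: the paper simply asserts that Result~\ref{result : vanishing result of oblique subQMF} ``remains valid'' when \(\tau\) is a rational trigonometric polynomial and applies it twice, whereas you substantiate exactly that point by re-deriving the \(q_{2,m}\) bound from the Lemma~\ref{lemma : HLO Theorem 1} form of \(q_{2,m}\) (splitting off the \(\gamma=0\) term and using the accuracy of \(\tau\) at \(\Gamma\setminus\{0\}\)), and by arguing the exact count for \(q_{1,l}\) directly from \(S(\lambda\cdot)\tau\to1\) at the origin. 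This buys two things: a self-contained proof that does not lean on an unproved extension of the cited result, and in fact a slightly sharper bound, \(\min\{a,\,2\min_l d_l\}\) rather than \(\min\{a,\min_l d_l\}\), of which the stated corollary is a consequence; your no-cancellation argument for the exact order \(c=2\min_l d_l\) (via nonnegativity of the leading homogeneous terms) is more than is needed, since the lower bound \(c\ge 2\min_l d_l\) already suffices. One small inaccuracy to fix: ``\(S\) and \(\tau\) are bounded, hence smooth near the origin'' is not a valid implication; the correct justification is that \(S\) and \(\tau\) are rational trigonometric polynomials whose denominators do not vanish at \(0\) (they take the value \(1\) there), so they are smooth in a neighborhood of the origin, and similarly one should note that the \(O(\lvert\xi\rvert^{k})\) estimates translate into derivative-based vanishing moments because \(q_{1,l}\) and \(q_{2,m}\) are smooth near \(0\) --- the same well-definedness caveat the paper itself imposes when defining these quantities for rational trigonometric polynomials.
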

	
	
	\begin{proof}
		We first observe that Result~\ref{result : vanishing result of oblique subQMF} remains valid when \(\tau\) is a rational trigonometric polynomial, not just a trigonometric polynomial. Using this generalized version of Result~\ref{result : vanishing result of oblique subQMF}, it is straightforward to see that the vanishing moments of \(q_{1,l}, 1 \leq l \leq N,\) are as stated above. For wavelet masks \(q_{2,m}, 1\leq m \leq \lambda^n,\) we already know that \(f_{\tau,S}(\xi) = \sum_{l=1}^N \lvert g_l(\lambda \xi) \rvert^2\) holds from the proof of Theorem~\ref{theorem : our oblique subQMF construction} under the setting of the theorem. We then obtain the stated result by applying the generalized version of Result~\ref{result : vanishing result of oblique subQMF} once more. 
	\end{proof}
	
	We present some examples that can be constructed via Theorem~\ref{theorem : our oblique subQMF construction}.
	
	\begin{example}
		Let \(n=1\) and \(N= \lambda\). We choose \(S\) by
		\begin{equation*}
			{1}/{S(\xi)} = \A + (1-\A)\cos^2 (\xi/2)
		\end{equation*}
		for any fixed real number \(\A > 1\). Since 
		\begin{equation*}
			\A + (1-\A) \cos^2 (\xi/2) + (1-\A) \sin^2 (\xi/2) = 1
		\end{equation*}
		holds, 
		we will find \(p_l=p\) and \(g_l=g\), for all \(1\leq l \leq  \lambda\), such that 
		\begin{equation*}
			\lvert g(\xi) \rvert^2 = \frac{\A-1}{\lambda} \sin^2 \frac{\xi}{2} \quad \text{and} \quad \lvert p(\xi) \rvert^2 = 1
		\end{equation*}
		to satisfy \(1/S(\xi) - \lambda \lvert g(\xi) \rvert^2 = \lvert p(\xi) \rvert^2\). Then we have
		\begin{equation*}
			g_l(\xi) = \sqrt{\frac{\A-1}{\lambda}} \left(\frac{1-e^{i\xi}}{2} \right) \quad \text{and} \quad p_l(\xi) \equiv 1, \quad 1 \leq l \leq  \lambda.
		\end{equation*}
		Also, we can find \(s\) satisfying \(1/S(\xi) = \lvert s(\xi) \rvert^2\) by 
		\begin{equation*}
			s(\xi) = ((1+\sqrt{\A}) + (1- \sqrt{\A})e^{i\xi})/2.
		\end{equation*}
		Thus, we have a refinement mask \(\tau\) as
		\begin{equation*}
			\tau(\xi) = \frac{2}{\lambda} \cdot\frac{1+e^{i\xi} + \cdots + e^{i(\lambda-1) \xi}}{ \left(1- \sqrt{\A}\right)e^{-i\xi}+(1+\sqrt{\A})}.
		\end{equation*}
		Since the accuracy number of \(\tau\) is \(1\), and all \(g_l, 1 \leq l \leq \lambda\), have one vanishing moment, we see that wavelet masks \(q_{1,l}, 1 \leq l \leq \lambda\), have exactly one vanishing moment, and \(q_{2,m}, 1 \leq m \leq \lambda\), have at least one vanishing moment from Corollary~\ref{corollary : vanishing result of our oblique subQMF}. 
		Thus, we get a univariate TWF, consisting of \(2\lambda\) mother wavelets with at least \(1\) vanishing moment.
		
		Consider the case when \(\lambda=2\) and \(\A= 5/3\). Then we can select \(S\) by \(3/(4-\cos\xi)\).
		We set \(g_1(\xi) = g_2(\xi) = (1/2\sqrt{3}) (1-e^{i\xi})\) and \(p_1(\xi) = p_2(\xi) \equiv 1\). Moreover, \(s\) is determined by \((3+\sqrt{15} + (3-\sqrt{15}) e^{i\xi})/6\). Then we have
		\begin{equation*}
			\tau(\xi) = \frac{3(1+e^{i\xi})}{(3-\sqrt{15}) e^{-i\xi}+3+\sqrt{15}}.
		\end{equation*}
		The accuracy number of \(\tau\) is 1 and each \(g_l, 1 \leq l \leq 2\), has one vanishing moment, hence the wavelet masks \(q_{1,l}, 1 \leq l \leq 2\), and \(q_{2,m}, 1 \leq m \leq 2,\) have at least one vanishing moment. As a result, we get a univariate TWF for \(\lambda=2\), consisting of \(4\) mother wavelets with at least \(1\) vanishing moment.
	\end{example}
	
	In fact, the construction described in the above example can be generalized to yield higher vanishing moments for \( g_l \), and consequently, for the wavelet masks \( q_{1,l} \), where \( 1 \leq l \leq \lambda \).
	
	\begin{example}
		Let \(n=1\) and \(N=\lambda\). For \(k=1,2,3,...\), we choose \(S\) by 
		\begin{equation*}
			\frac{1}{S(\xi)} = \A + (1-\A) \left(\left(\cos^2 \frac{\xi}{2}+\sin^2\frac{\xi}{2}\right)^k-\sin^{2k} \frac{\xi}{2} \right)
		\end{equation*}
		for any fixed real number \(\A>1\). 
		Since
		\begin{equation*}
			\A + (1-\A) \left(\left(\cos^2 \frac{\xi}{2}+\sin^2\frac{\xi}{2}\right)^k-\sin^{2k} \frac{\xi}{2} \right) + (1-\A) \left( \sin^{2k} \frac{\xi}{2}\right) = 1
		\end{equation*}
		holds, if we set \(p_l(\xi) \equiv 1\) for \( 1 \leq l \leq \lambda\), and 
		\begin{equation*}
			g_l(\xi) = E(k;\xi) \sqrt{\frac{\A-1}{\lambda}}  \left(\frac{1-e^{i\xi}}{2} \right)^k, \quad 1 \leq l \leq \lambda, 
		\end{equation*}	
		where \(E(k;\xi)\) is a trigonometric polynomial satisfying \(\lvert E(k;\xi) \rvert \equiv 1\), 	
		then \(1/S(\xi) - \lambda \lvert g_l(\xi) \rvert^2 = \lvert p_l(\xi) \rvert^2\), for all \(1\leq l \leq  \lambda\). 
		Since \(1/S\) is a trigonometric polynomial satisfying \(1/S \geq 1\) on \(\mathbb{T}\), by Fej\'{e}r-Riesz lemma, there exists a trigonometric polynomial \(s\) such that  \(1/S=\lvert s \rvert^2 \) on \(\mathbb{T}\), and we can proceed with this \(s\) to define the refinement mask \(\tau\) and the wavelet masks as in Theorem~\ref{theorem : our oblique subQMF construction}. In particular, we get 
		\begin{equation*}
			\tau(\xi) = (1+ e^{i\xi} + \cdots + e^{i(\lambda-1)\xi})/(\lambda{\overline{s(\xi)}}).
		\end{equation*}
		Note that the accuracy number of \(\tau\) is \(1\), and all \(g_l, 1 \leq l \leq \lambda\), have \(k\) vanishing moments. Hence the wavelet masks \(q_{1,l}, 1 \leq l \leq \lambda\), have exactly \(k\) vanishing moments, whereas \(q_{2,m}, 1 \leq m \leq \lambda\), have at least one vanishing moment from Corollary~\ref{corollary : vanishing result of our oblique subQMF}.
		
		The construction in the previous example corresponds to the case where \(k=1\) with \(E(1; \xi) \equiv 1\).
		
		When \(k=2\), the function \(1/S(\xi)\) is given as 
		\begin{equation*}
			\A + (1-\A) \left(\cos^4 \frac{\xi}{2} + 2 \sin^2\frac{\xi}{2} \cos^2 \frac{\xi}{2}\right) = \A + (1-\A)  \left(\frac{5 + 4 \cos \xi - \cos 2\xi}{8}\right),
		\end{equation*}
		and the function \(s\) such that  \(1/S=\lvert s \rvert^2 \) on \(\mathbb{T}\) can be found concretely as 
		\begin{equation*}
			s(\xi) = \frac{1 + \sqrt{\A} + \sqrt{2 + 2 \sqrt{\A}}}{4}e^{-i\xi} + \frac{1-\sqrt{\A}}{2} + \frac{1+ \sqrt{\A} - \sqrt{2+2\sqrt{\A}}}{4}e^{i\xi}.
		\end{equation*}		
		By choosing \(E(2; \xi)=e^{-i\xi}\) for symmetry, we get
		\begin{equation*}
			g_l(\xi) = \sqrt{\frac{\A-1}{\lambda}} \left(\frac{e^{-i\xi} - 2 + e^{i\xi}}{4}\right) \quad \text{and} \quad p_l(\xi) \equiv 1, \quad 1 \leq l \leq \lambda, 
		\end{equation*}		
		and the refinement mask \(\tau\) in this case becomes
		\begin{equation*}
			\frac{4}{\lambda} \cdot\frac{1+ e^{i\xi} + \cdots + e^{i(\lambda-1)\xi}}{(1 + \sqrt{\A} - \sqrt{2 + 2 \sqrt{\A}})e^{-i\xi} + 2(1-\sqrt{\A}) + (1+ \sqrt{\A} + \sqrt{2+2\sqrt{\A}})e^{i\xi}}.
		\end{equation*}
	\end{example}

	\section{On refinable functions with rational trigonometric polynomial refinement masks}
	\label{section : refinable ftn with rationalTP}
	
	As briefly mentioned in Remark~\ref{remark : refinable function with TP} following Result~\ref{result : OEP} within Section~\ref{subsection : TWF and EP}, a refinable function~\(\phi_\tau \in L^2(\mathbb{R}^n)\) can be derived from a trigonometric polynomial \(\tau\) satisfying the UEP condition~\cite{BHan}. In this section, we provide sufficient conditions for a refinable function derived from a rational trigonometric polynomial \(\tau\) to be in \(L^2(\mathbb{R}^n)\). Furthermore, we demonstrate that the refinable function resulting from such \(\tau\) satisfies the specific property required for constructing a TWF.

	The first proposition establishes that a refinable function can be defined from a rational trigonometric polynomial in the same way as it is derived from a trigonometric polynomial in \cite{BHan}, which can be considered as an extension of Lemma 2.1 in~\cite{BHan}. Our argument follows the methods employed in proving the lemma, and Theorem 2 in \cite{[HLO]MultiTWFhighVM}. This proposition is used in the proof of Theorem~\ref{theorem : our oblique subQMF construction} in Section~\ref{subsection : our TWF using oblique subQMF}.
	
	\begin{proposition}
		Let \(S\) be a nonzero rational trigonometric polynomial, which is nonnegative on \(\mathbb{T}^n\), and \(\tau\) be a rational trigonometric polynomial satisfying \(\tau(0) = 1\). 
		If the following conditions hold:
		\begin{enumerate}[(i)]
			\item the oblique sub-QMF condition: 
			\begin{equation*}
				f_{\tau,S}(\xi) = \frac{1}{S(\lambda \xi)} - \sum_{\gamma \in \Gamma} \frac{\lvert \tau(\xi + \gamma) \rvert^2}{S(\xi + \gamma)} \geq 0 
			\end{equation*}
			for all \(\xi \in \mathbb{T}^n\) where the rational trigonometric polynomial \(f_{\tau,S}\) is defined, and
			\item \(S\) is continuous at \(0\) with \(S(0)=1\), and \(1/S \in L^\infty(\mathbb{R}^n)\),
		\end{enumerate}
		then \(\phi_\tau\) defined by specifying its Fourier transform as \(\widehat{\phi_\tau}(\xi):=\prod_{j=1}^\infty \tau(\lambda^{-j} \xi)\) is a refinable function in \(L^2(\mathbb{R}^n)\).
		\label{prop : refinable ftn from rationalTP}
	\end{proposition}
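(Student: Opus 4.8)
The plan is to follow the truncation-plus-Fatou scheme of Lemma~2.1 in \cite{BHan} and Theorem~2 of \cite{[HLO]MultiTWFhighVM}, modified to carry the weight $1/S$ and to tolerate the fact that $\tau$ and $S$ are merely rational. Write $m_N(\xi):=\prod_{j=1}^{N}\tau(\lambda^{-j}\xi)$ (with $m_0\equiv1$), so that $m_N$ is $2\pi\lambda^N\mathbb{Z}^n$-periodic, $m_N(\xi)=\tau(\lambda^{-N}\xi)\,m_{N-1}(\xi)$, and $\widehat{\phi_\tau}=\lim_{N\to\infty}m_N$ wherever the product converges. For $N\ge0$ set
\[
f_N(\xi):=|m_N(\xi)|^2\,\frac{1}{S(\lambda^{-N}\xi)}\,\chi_{Q_N}(\xi),\qquad Q_N:=[-\pi\lambda^N,\pi\lambda^N]^n,
\]
a nonnegative function (here $S\ge0$ is used). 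First I would record the facts needed to pass to the limit: since $\tau(0)=1$ the denominator of $\tau$ is nonzero near $0$, so $\tau$ is $C^1$ there and $|\tau(\eta)-1|\le C|\eta|$ on a neighborhood of the origin; hence for every $\xi$ the tail $\sum_{j\ge j_0}|\tau(\lambda^{-j}\xi)-1|$ is finite and, apart from the null set of $\xi$ for which one of the finitely many early points $\lambda^{-1}\xi,\dots,\lambda^{-(j_0-1)}\xi$ meets the zero set of the denominator of $\tau$, the product $m_N(\xi)$ converges. Moreover $\chi_{Q_N}\to1$ everywhere and, by continuity of $S$ at $0$ with $S(0)=1$, $1/S(\lambda^{-N}\xi)\to1$ everywhere, so $f_N\to|\widehat{\phi_\tau}|^2$ a.e.

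The heart of the proof is the monotonicity estimate $\int f_N\le\int f_{N-1}$ for all $N\ge1$. Starting from $\int f_N=\int_{Q_N}|m_{N-1}(\xi)|^2\,|\tau(\lambda^{-N}\xi)|^2\,S(\lambda^{-N}\xi)^{-1}\,d\xi$, I would note that the integrand is $2\pi\lambda^N\mathbb{Z}^n$-periodic whereas $m_{N-1}$ is already $2\pi\lambda^{N-1}\mathbb{Z}^n$-periodic, and decompose $Q_N$ into the $\lambda^n$ translates $Q_{N-1}+2\pi\lambda^{N-1}k$, $k\in\{0,\dots,\lambda-1\}^n$, which tile $\mathbb{R}^n$ modulo $2\pi\lambda^N\mathbb{Z}^n$. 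Using periodicity of $m_{N-1}$, $\tau$ and $S$, and that $\{2\pi k/\lambda:k\in\{0,\dots,\lambda-1\}^n\}=\Gamma$, the sum over $k$ reassembles into
\[
\int f_N=\int_{Q_{N-1}}|m_{N-1}(\xi)|^2\Bigl(\sum_{\gamma\in\Gamma}\frac{|\tau(\lambda^{-N}\xi+\gamma)|^2}{S(\lambda^{-N}\xi+\gamma)}\Bigr)\,d\xi .
\]
Applying hypothesis~(i) (the oblique sub-QMF inequality) at the point $\lambda^{-N}\xi$ bounds the parenthesized sum by $1/S(\lambda^{-(N-1)}\xi)$, i.e.\ by the weight appearing in $f_{N-1}$, which gives $\int f_N\le\int f_{N-1}$. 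Iterating, $\int f_N\le\int f_0=\int_{[-\pi,\pi]^n}S(\xi)^{-1}\,d\xi\le(2\pi)^n\|1/S\|_{L^\infty}<\infty$ by hypothesis~(ii).

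Finally, Fatou's lemma gives $\int_{\mathbb{R}^n}|\widehat{\phi_\tau}|^2=\int\liminf_N f_N\le\liminf_N\int f_N\le(2\pi)^n\|1/S\|_{L^\infty}<\infty$, so $\widehat{\phi_\tau}\in L^2(\mathbb{R}^n)$ and hence $\phi_\tau\in L^2(\mathbb{R}^n)$; the refinement identity $\widehat{\phi_\tau}(\lambda\xi)=\tau(\xi)\widehat{\phi_\tau}(\xi)$ follows at once by factoring the infinite product. I expect the main obstacle to be exactly the bookkeeping in the monotonicity step: choosing the truncation $f_N$ with the precise weight $1/S(\lambda^{-N}\cdot)$ and unfolding $Q_N$ along the two relevant period lattices so that the telescoping meshes with the oblique sub-QMF inequality. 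The fact that $\tau$ and $S$ are rational rather than polynomial enters only mildly — one works almost everywhere, and uses $1/S\in L^\infty$ to seed the induction and the finiteness of $\tau$ at the origin to make the infinite product converge.
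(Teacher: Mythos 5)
Your proposal is correct and takes essentially the same route as the paper: the same truncated functions (your $f_N=|m_N|^2\,S(\lambda^{-N}\cdot)^{-1}\chi_{Q_N}$ is exactly the square of the paper's $f_m$, which carries the weight $S^{-1/2}$), one application of the oblique sub-QMF condition per step, the uniform bound $(2\pi)^n\|1/S\|_{L^\infty}$ from hypothesis (ii), a.e.\ convergence of the products via continuity of $S$ at $0$ (and rationality of $\tau$ off a null set), Fatou's lemma, and the refinement identity by factoring the infinite product. The only cosmetic difference is bookkeeping: the paper proves the pointwise periodization bound $[f_m,f_m]\le 1/S$ by induction and then integrates over $[-\pi,\pi)^n$, whereas you unfold $Q_N$ into the $\lambda^n$ translates of $Q_{N-1}$ to obtain the integral monotonicity $\int f_N\le\int f_{N-1}$ --- the same computation carried out under the integral sign.
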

	
	\begin{proof}
		Let \(f_0(\xi):= \chi_{[-\pi,\pi)^n}(\xi) S(\xi)^{-1/2}\), and \(f_m(\xi) := \tau(\lambda^{-1} \xi) f_{m-1} (\lambda^{-1} \xi)= \chi_{[-\lambda^m\pi,\lambda^m\pi)^n}(\xi) S(\lambda^{-m} \xi) ^{-1/2} \prod_{j=1}^m \tau(\lambda^{-j} \xi)\) for \(m \in \mathbb{N}\). We prove by induction that for all \(\xi \in \mathbb{T}^n\) at which both sides are defined, \(\lbrack f_m, f_m\rbrack(\xi ) := \sum_{k\in\mathbb{Z}^n} \lvert f_m(\xi+2\pi k) \rvert^2 \leq 1/S(\xi)\) holds for \(m \geq 0\).  Clearly, it holds in case of \(m=0\) since \(\lbrack f_0, f_0 \rbrack(\xi) = 1/S(\xi)\). We now suppose that \(\lbrack f_{m-1}, f_{m-1} \rbrack(\xi) \leq 1/S(\xi)\) for some \(m\in\mathbb{N}\). Then 
		\begin{eqnarray*}
			\lbrack f_m, f_m \rbrack (\xi) 
			& = & \sum_{k\in\mathbb{Z}^n} \lvert \tau(\lambda^{-1} (\xi+2\pi k))\rvert^2 \lvert f_{m-1} (\lambda^{-1} (\xi + 2\pi k)) \rvert^2 \\
			& = & \sum_{\gamma \in \Gamma} \sum_{k \in \mathbb{Z}^n} \lvert \tau(\lambda^{-1} \xi + \gamma + 2\pi k) \rvert^2 \lvert f_{m-1} (\lambda^{-1}\xi + \gamma + 2\pi k)\rvert^2 \\
			& = & \sum_{\gamma \in\Gamma} \lvert \tau(\lambda^{-1}\xi + \gamma) \rvert^2 \lbrack f_{m-1}, f_{m-1} \rbrack (\lambda^{-1} \xi + \gamma) \\
			& \leq & \sum_{\gamma \in\Gamma}  \frac{\lvert \tau(\lambda^{-1}\xi + \gamma) \rvert^2}{S(\lambda^{-1} \xi + \gamma)} \leq \frac{1}{S(\xi)},
		\end{eqnarray*}
		where the last inequality holds from the oblique sub-QMF condition.
		Then we obtain
		\begin{equation*}
			\lVert f_m(\xi) \rVert_{L^2(\mathbb{R}^n)}^2 = \int_{[-\pi, \pi)^n} \lbrack f_m, f_m \rbrack (\xi) d \xi \leq \int_{[-\pi, \pi)^n} \frac{1}{S(\xi)} d \xi \leq C (2\pi)^n
		\end{equation*}
		for some \(C>0\) since \(1/S \in L^\infty(\mathbb{T}^n)\). We also note that \(f_m(\xi) \to \widehat{\phi_\tau}(\xi)\) as \(m \to \infty\) by using the continuity of \(S\) at \(0\) and \(S(0)=1\). By Fatou's lemma, we see that
		\begin{equation*}
			\lVert \widehat{\phi_\tau}(\xi) \rVert_{L^2(\mathbb{R}^n)}^2 = \int \lim_{m \to \infty} \lvert f_m(\xi) \rvert^2 d\xi \leq \liminf_{m\to\infty} \int \lvert f_m (\xi) \rvert^2 d\xi \leq C(2\pi)^n<\infty.
		\end{equation*}
		Thus, we have \(\widehat{\phi_\tau} \in L^2(\mathbb{R}^n)\) which implies \(\phi_\tau \in L^2(\mathbb{R}^n)\). Finally, we have 
		\begin{equation*}
			\widehat{\phi_\tau}(\lambda \xi) = \prod_{j=1}^\infty \tau(\lambda^{1-j} \xi) = \tau(\xi) \prod_{j=1}^\infty \tau(\lambda^{-j} \xi) = \tau(\xi) \widehat{\phi_\tau}(\xi).
		\end{equation*}
		This completes the proof.
	\end{proof}
	
	\begin{remark*}
		We note that the condition (i) in the above proposition can be substituted with the OEP condition (\ref{eqn : OEP_condition}), as their equivalence has been demonstrated in Theorem~1 of \cite{[HLO]MultiTWFhighVM}: there exist rational trigonometric polynomials  \(q_{l}, 1 \leq l \leq L\) such that for all \(\xi \in \mathbb{T}^n\) at which both sides are defined:
		\begin{equation*}
			S(\lambda \xi) \tau(\xi) \overline{\tau(\xi + \gamma)} + \sum_{l=1}^L q_l(\xi) \overline{q_l(\xi + \gamma)} = \begin{cases}
				S(\xi), & \text{if } \gamma=0, \\
				0, & \text{if } \gamma \in \Gamma \setminus \{0\}.
			\end{cases}
		\end{equation*}
	\end{remark*}

	
		The following proposition is also used in the proof of Theorem~\ref{theorem : our oblique subQMF construction} when we employ Result~\ref{result : OEP}. Specifically, the condition (b) of Result~\ref{result : OEP} corresponds to the convergence of the refinable function at \(0\). In the proposition below, we establish the convergence of the product \(\prod_{j=1}^\infty \tau(\lambda^{-j} \xi)\), which later becomes the refinable function \(\phi_\tau\) under additional conditions as presented in Proposition~\ref{prop : refinable ftn from rationalTP}, at \(\xi=0\).

	\begin{proposition}
		Let \(\tau\) be a rational trigonometric polynomial satisfying \(\tau(0)=1\). Then 
		\begin{equation*}
			\lim_{\xi\to0} \prod_{j=1}^\infty \tau(\lambda^{-j} \xi) = 1.
		\end{equation*}
		\label{prop : convergence of prod at 0}
	\end{proposition}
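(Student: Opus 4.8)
The plan is to reduce the statement to the classical infinite-product argument used for trigonometric-polynomial masks (cf. Remark~\ref{remark : refinable function with TP}), the only genuinely new point being that a rational trigonometric polynomial taking the value $1$ at the origin is smooth near $0$. First I would note that, since $\tau(0)=1$ is a finite number, writing $\tau=\tau_1/\tau_2$ forces $\tau_2(0)\neq 0$; by continuity of the trigonometric polynomial $\tau_2$ there is a radius $\delta>0$ such that $\tau_2$ is nonvanishing on $B_\delta:=\{\xi\in\mathbb{R}^n:|\xi|<\delta\}$, so $\tau$ is $C^1$ (indeed real-analytic) on $B_\delta$. Consequently, after possibly shrinking $\delta$ so that $\nabla\tau$ is bounded on $\overline{B_\delta}$ by some $C>0$, the mean value inequality gives $|\tau(\xi)-1|=|\tau(\xi)-\tau(0)|\le C|\xi|$ for all $\xi\in B_\delta$.

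Next, for $\xi\in B_\delta$ and every $j\ge 1$ we have $\lambda^{-j}\xi\in B_\delta$ because $\lambda\ge 2$, hence
\[
\sum_{j=1}^\infty \bigl|\tau(\lambda^{-j}\xi)-1\bigr|\le C|\xi|\sum_{j=1}^\infty \lambda^{-j}=\frac{C|\xi|}{\lambda-1}<\infty .
\]
Absolute convergence of this series guarantees that the partial products $\prod_{j=1}^N\tau(\lambda^{-j}\xi)$ converge as $N\to\infty$ to a finite limit $\prod_{j=1}^\infty\tau(\lambda^{-j}\xi)$. Setting $a_j:=\tau(\lambda^{-j}\xi)-1$ and using the elementary estimate $\bigl|\prod_{j=1}^N(1+a_j)-1\bigr|\le \prod_{j=1}^N(1+|a_j|)-1\le \exp\!\bigl(\sum_{j=1}^N|a_j|\bigr)-1$ (proved by a one-line induction together with $1+x\le e^x$), and letting $N\to\infty$, I obtain
\[
\Bigl|\prod_{j=1}^\infty\tau(\lambda^{-j}\xi)-1\Bigr|\le \exp\!\left(\frac{C|\xi|}{\lambda-1}\right)-1 .
\]
Since the right-hand side tends to $0$ as $\xi\to 0$, the limit in the statement equals $1$.

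I do not anticipate a real obstacle here: the second and third steps are exactly the standard telescoping/exponential bound for infinite products, and the only point requiring care is the first one --- namely, that $\tau$ is smooth, hence Lipschitz, in a neighbourhood of the origin despite being merely rational. This is immediate once one observes that the hypothesis $\tau(0)=1$ already forces the denominator to be nonzero at $0$, so that the usual estimates for trigonometric polynomials apply verbatim on $B_\delta$.
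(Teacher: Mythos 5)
Your proof is correct, and its opening reduction is exactly the paper's: $\tau(0)=1$ forces $\tau_2(0)\neq 0$, so $\tau$ is smooth, hence Lipschitz, on a small ball about the origin, and the geometric factor $\lambda^{-j}$ makes $\sum_{j\ge 1}\lvert\tau(\lambda^{-j}\xi)-1\rvert$ summable. Where you differ is the finishing device. The paper bounds $\lvert\tau(\lambda^{-j}\xi)-1\rvert\le C\lambda^{-j}$ uniformly in $\xi$ on a fixed neighbourhood, invokes the uniform-convergence criterion for infinite products (Result~\ref{lemma : stein product convergence}, quoted from Stein--Shakarchi) to conclude the product is continuous near $0$, and then interchanges $\lim_{\xi\to 0}$ with the infinite product. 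You instead keep the $\lvert\xi\rvert$-dependence and derive the explicit estimate $\bigl|\prod_{j\ge 1}\tau(\lambda^{-j}\xi)-1\bigr|\le \exp\bigl(C\lvert\xi\rvert/(\lambda-1)\bigr)-1$ from the elementary inequality $\bigl|\prod_{j=1}^{N}(1+a_j)-1\bigr|\le \prod_{j=1}^{N}(1+\lvert a_j\rvert)-1\le e^{\sum_j\lvert a_j\rvert}-1$, which gives the limit directly, with no interchange of limits and no appeal to an external convergence theorem. Both arguments are sound; yours is more self-contained and quantitative (it even provides a rate as $\xi\to 0$), while the paper's route additionally records uniform convergence of the defining product of $\widehat{\phi_\tau}$ on a neighbourhood of the origin, mirroring the standard treatment in the trigonometric-polynomial case.
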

	
	To demonstrate the above proposition, we employ the following fundamental result presented in \cite[p.141]{[stein]complex}, specialized to our setting.
	
	\begin{result}[\cite{[stein]complex}]
		Suppose \(\{F_j\}\) be a sequence of complex-valued functions on the open set \(K \subset \mathbb{R}^n\). If there exist constants \(c_j>0\) such that 
		\begin{equation*}
			\sum_{j=1}^\infty c_j < \infty \quad \text{and} \quad \lvert F_j(x) - 1 \rvert \leq c_j,\,\, \forall x \in K,
		\end{equation*}
		then the product \(\prod_{j=1}^\infty F_j(x)\) converges uniformly in \(K\).
		\label{lemma : stein product convergence}
	\end{result}
	
	\begin{proof}[Proof of Proposition~\ref{prop : convergence of prod at 0}]
		Since a rational trigonometric polynomial \(\tau= \tau_1/\tau_2\) has value as \(1\) at \(0\), we see that a trigonometric polynomial \(\tau_2\) is nonvanishing at \(0\). Since every trigonometric polynomial is continuous on \(\mathbb{T}^n\), we can find \(0<\epsilon<1\) such that \(\tau_2\) is nonvanishing on \(D:= \{\xi \in \mathbb{T}^n : \lvert \xi \rvert \leq \epsilon\}\). Thus we observe that \(\tau\) is a \(C^\infty\) function on~\(D\). Then, for all \(\xi \in D_{0}:= \{\xi \in \mathbb{T}^n : \lvert \xi \rvert <\epsilon/2\}\), we see that \(\lvert \tau(\lambda^{-j} \xi) - \tau(0) \rvert \leq \sup_{z \in D} \lvert \nabla\tau(z) \rvert \lvert\lambda^{-j} \xi \rvert\). This shows that \(\tau\) is Lipschitz continuous on \(D_0\).
		Then we have \(\lvert \tau(\lambda^{-j}\xi) - 1 \rvert = \lvert \tau(\lambda^{-j}\xi) - \tau(0) \rvert \leq C \lvert \lambda^{-j} \xi \rvert < C \lambda^{-j}\) for all \(\xi \in D_0\) with \(C := \sup_{z\in D} \lvert \nabla\tau (z)\rvert\).
		Moreover, we have \(\sum_{j=1}^\infty C \lambda^{-j} = C/(\lambda-1)<\infty\). Therefore, the product \(\prod_{j=1}^\infty \tau(\lambda^{-j} \xi)\) converges uniformly in \(D_0\) from Result~\ref{lemma : stein product convergence}. Finally, since \(\tau\) is continuous at~\(0\), we have
		\begin{equation*}
			\lim_{\xi\to0} \prod_{j=1}^\infty \tau(\lambda^{-j} \xi) = \prod_{j=1}^\infty \lim_{\xi\to 0} \tau(\lambda^{-j} \xi) = 1.
		\end{equation*}
	\end{proof}

	\section{Conclusion}
	\label{section : conclusion}
	
	This paper diverges from the traditional approach of constructing TWFs from a given refinable function. We introduced conditions for (rational) trigonometric polynomials, which later serve as building blocks of refinement and wavelet masks, distributing the responsibility between masks. Under these conditions, we presented the exact form of masks that generate a TWF (c.f., Theorems~\ref{theorem: our subQMF construction} and~\ref{theorem : our oblique subQMF construction}). Notably, we established sufficient conditions for the refinable function to be well-defined in \(L^2(\mathbb{R}^n)\) when the refinement mask is a rational trigonometric polynomial (c.f., Proposition~\ref{prop : refinable ftn from rationalTP}).

    \section*{Acknowledgement}

    This work was supported by the National Research Foundation of Korea~(NRF) [Grant Numbers 2021R1A2C1007598]. 

\appendix
\section{Proofs of auxiliary results in Section~\ref{subsubsection: examples of our sub-QMF method}}
\label{section: appendix}
\renewcommand{\thesection}{\Alph{section}} 
\makeatletter
\renewcommand\@seccntformat[1]{\appendixname\ \csname the#1\endcsname.\hspace{0.5em}}
\makeatother

    In this appendix, we provide the proofs of Proposition~\ref{prop: vanishing moment for q2} and Lemma~\ref{lemma : FejerRieszlemma}, both of which appear in Section~\ref{subsubsection: examples of our sub-QMF method}. We begin by proving Proposition~\ref{prop: vanishing moment for q2}. To prove this proposition, we make use of the following lemma, whose proof is straightforward and thus omitted.

    \begin{lemma}
        Let \(\alpha\in\mathbb{N}_0^n\) be a multi-index and \(\kappa\) be a trigonometric polynomial, that is,
        \begin{equation*}
            \kappa(\xi) = \sum_{k\in\mathbb{Z}^n} h(k) e^{-ik\cdot \xi}.
        \end{equation*} Then 
        \begin{equation*}
            D^\alpha \overline{\kappa(\xi)} = \overline{D^\alpha \kappa(\xi)}.
        \end{equation*}
        Moreover, if all coefficients of \(\kappa\) are real, then \(D^\alpha \overline{\kappa(0)} = (-1)^{\lvert \alpha \rvert} D^\alpha \kappa(0)\).
        \label{lemma:lemma for conjugate}
    \end{lemma}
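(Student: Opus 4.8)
The plan is to prove both assertions by expanding $\kappa$ into its (finite) Fourier series and differentiating term by term; since a trigonometric polynomial has only finitely many nonzero coefficients $h(k)$, there is no convergence issue in exchanging $D^\alpha$ with the summation. First I would write $\overline{\kappa(\xi)} = \sum_{k\in\mathbb{Z}^n}\overline{h(k)}\,e^{ik\cdot\xi}$, the sum running over the finite support of $h$.

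Next I would record the elementary identity $D^\alpha e^{\pm ik\cdot\xi} = (\pm ik)^\alpha e^{\pm ik\cdot\xi}$, using multi-index notation $(\pm ik)^\alpha := \prod_{j=1}^n(\pm ik_j)^{\alpha_j}$, which is immediate from $D^\alpha = \partial_{\xi_1}^{\alpha_1}\cdots\partial_{\xi_n}^{\alpha_n}$ and the one-variable case. Applying this to the expansion of $\overline{\kappa}$ gives $D^\alpha\overline{\kappa(\xi)} = \sum_k \overline{h(k)}\,(ik)^\alpha e^{ik\cdot\xi}$, whereas differentiating $\kappa(\xi)=\sum_k h(k)e^{-ik\cdot\xi}$ and then conjugating gives $\overline{D^\alpha\kappa(\xi)} = \sum_k \overline{h(k)}\,\overline{(-ik)^\alpha}\,e^{ik\cdot\xi}$. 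Since each $k_j\in\mathbb{Z}$ is real, $\overline{(-ik_j)^{\alpha_j}} = (ik_j)^{\alpha_j}$, hence $\overline{(-ik)^\alpha} = (ik)^\alpha$, and the two expressions coincide term by term; this establishes the first identity. (One could alternatively invoke the general fact that real-coefficient differential operators commute with complex conjugation, but the term-by-term argument keeps the lemma self-contained.)

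For the second assertion, I would assume all $h(k)\in\mathbb{R}$ and evaluate both sides at $\xi=0$. From the formula just derived, $D^\alpha\overline{\kappa(0)} = \sum_k h(k)(ik)^\alpha$, while $D^\alpha\kappa(0) = \sum_k h(k)(-ik)^\alpha$. The relation $(-ik)^\alpha = \prod_{j=1}^n(-1)^{\alpha_j}(ik_j)^{\alpha_j} = (-1)^{|\alpha|}(ik)^\alpha$, with $|\alpha|=\alpha_1+\cdots+\alpha_n$, then gives $D^\alpha\kappa(0) = (-1)^{|\alpha|}\sum_k h(k)(ik)^\alpha = (-1)^{|\alpha|}D^\alpha\overline{\kappa(0)}$, which is exactly the claimed identity after multiplying through by $(-1)^{|\alpha|}$. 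There is no genuine obstacle here: the only point requiring any care is the bookkeeping with the multi-index powers of $i$ and the sign $(-1)^{|\alpha|}$, and observing that reality of the coefficients is precisely what collapses $\overline{h(k)}$ to $h(k)$ in the last step.
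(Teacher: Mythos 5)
Your proof is correct. The paper itself omits the argument for this lemma as ``straightforward,'' and your term-by-term differentiation of the finite Fourier expansion — using $\overline{(-ik)^\alpha}=(ik)^\alpha$ for real $k$ and $(-ik)^\alpha=(-1)^{|\alpha|}(ik)^\alpha$ — is precisely the elementary computation the authors had in mind, so there is nothing to add or correct.
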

    
    \begin{proof}[Proof of Proposition~\ref{prop: vanishing moment for q2}]

        Fix \(1\leq m \leq \lambda^n\). 
        For a multi-index \(\alpha\), we compute the derivative
        \begin{align*}
        D^{\alpha} q_{2,m}(\xi) 
        = \lambda^{-n/2} \big[
        (i\nu_{m}^\alpha) e^{i \nu_m \cdot \xi} 
        - \big( 
        D^{\alpha} \tau(\xi) \overline{p_m(\lambda \xi)} 
        + \lambda \tau(\xi) D^{\alpha} \overline{p_m(2\xi)}
        \big) \big].
        \end{align*}
        Evaluating at \(\xi = 0\) and applying Lemma~\ref{lemma:lemma for conjugate}, we obtain
        \begin{equation}
        D^{\alpha} q_{2,m}(0) 
        = \lambda^{-n/2} \left[
        (i\nu_{m}^\alpha) 
        - \left( D^{\alpha} \tau(0) 
        - \lambda D^{\alpha} p_m(0) \right)
        \right].
        \label{eqn:derivative-qm}
        \end{equation}
        By differentiating \(\tau\), we have
        \[
        D^{\alpha} \tau(\xi) = \lambda^{-n} \sum_{l=1}^{\lambda^n} 
        \lambda D^{\alpha} p_l(\lambda \xi)e^{i \nu_l \cdot \xi} 
        + (i\nu_l^\alpha) p_l(\lambda \xi)
         e^{i \nu_l \cdot \xi}, 
        \]
        which implies
        \[
        D^{\alpha} \tau(0) 
        = \lambda^{1-n} \sum_{l=1}^{\lambda^n} D^{\alpha} p_l(0) 
        + (i\lambda^{-n}) \sum_{l=1}^{\lambda^n} \nu_l^\alpha.
        \]
        Substituting this into \eqref{eqn:derivative-qm} yields that \(D^{\alpha}q_{2,m}(0)=0\) is equivalent to the following equation:
        \begin{equation*}
            \sum_{l=1}^{\lambda^n} D^{\alpha}p_l(0) - \lambda^n D^{\alpha} p_m(0) = -\frac{i}{\lambda}\left( \sum_{l=1}^{\lambda^n} \nu_{l}^\alpha - \lambda^n \nu_{m}^\alpha\right).
        \end{equation*}
        Thus, we have that \(D^{\alpha} q_{2,m}(0)=0\) for all \(1 \leq m \leq \lambda^n\) if and only if 
        \begin{align*}
            B \begin{bmatrix}
                D^{\alpha}p_1(0)\\
                \vdots\\
                D^{\alpha}p_{\lambda^n}(0)
            \end{bmatrix} = \left(-\frac{i}{\lambda} \right)
            B \begin{bmatrix}
                \nu_1^\alpha \\
                \vdots\\
                \nu_{\lambda^n}^\alpha
            \end{bmatrix}, \quad\text{where}\quad B:=\begin{bmatrix}
                1-\lambda^n & \cdots & 1\\
                \vdots &  \ddots & \vdots\\
                1 &\cdots & 1-\lambda^n
            \end{bmatrix}
        \end{align*}
        is of size \(\lambda^n\times\lambda^n\).
        Hence, we obtain the desired result by noting that \(B\) is row equivalent to the matrix formed by appending a zero row to \(A\).
    \end{proof}

    We now present the proof of Lemma~\ref{lemma : FejerRieszlemma}.
    
    \begin{proof}[Proof of Lemma~\ref{lemma : FejerRieszlemma}]
    		Our proof is a straightforward application of Fej\'{e}r-Riesz lemma~\cite[Lemma 6.1.3]{[Daubechies]TenLectures}. Since \(1-\lambda^n \lvert G(\mu) \rvert^2=1-\lambda^n \lvert g(\xi) \rvert^2 \geq 0\) with \(\mu:=\xi \cdot \omega\), there is a univariate trigonometric polynomial \(P\) such that \(1 - \lambda^n \lvert G(\mu) \rvert^2 = \lvert P (\mu) \rvert^2\). By setting \(p(\xi):= P(\mu)\), we see that \(\lvert p(\xi) \rvert^2 = 1- \lambda^n \lvert g(\xi) \rvert^2\),  \(\xi \in \mathbb{T}^n\), holds.
    \end{proof}


	
	
	
	
\end{document}